\newtheorem{thm}{Theorem}
\newtheorem{prop}[thm]{Proposition}
\newtheorem{cor}[thm]{Corollary}
\renewcommand{\P}{\mathbbm{P}}
\def\esp{\mathbbm{E}}
\DeclareMathOperator{\Var}{Var}
\DeclareMathOperator{\Tr}{Tr}
\title{Eigenvalue variance bounds for Wigner and covariance random matrices}
\author{S. Dallaporta}
\date {University of Toulouse, France}
\begin{document}
\maketitle

\bigskip 
{{\bf Abstract.} 
{\it This work is concerned with finite range bounds on the variance of individual eigenvalues
of Wigner random matrices, in the bulk and at the edge of the spectrum, as well as for some intermediate
eigenvalues. Relying on the GUE example, which needs to be investigated first, the main bounds are
extended to families of Hermitian
Wigner matrices by means of the Tao and Vu Four Moment Theorem and recent localization
results by Erdös, Yau and Yin. The case of real Wigner matrices is obtained from interlacing formulas.
As an application, bounds on the expected $2$-Wasserstein distance between the
empirical spectral measure and the semicircle law are derived. Similar results are available for random covariance matrices.}

\bigskip \bigskip \bigskip


Two different models of random Hermitian matrices were introduced by Wishart in the twenties and by Wigner in
the fifties. Wishart was interested in modeling tables of random data in multivariate analysis and worked on
random covariance matrices. In this paper, the results for covariance matrices are very close to those for
Wigner matrices. Therefore, it deals mainly with Wigner matrices. Definitions and results regarding covariance
matrices are available in the last section.

Random Wigner matrices were first introduced by Wigner to study eigenvalues of infinite-dimensional operators
in statistical physics (see \cite{Me_1991_book}) and then propagated to various fields of mathematics involved
in the study of spectra of random matrices. Under suitable symmetry assumptions, the asymptotic properties of
the eigenvalues of a random matrix were soon conjectured to be universal, in the sense they do not depend on
the individual distribution of the matrix entries. This opened the way to numerous developments on the
asymptotics of various statistics of the eigenvalues of random matrices, such as for example the global
behavior of the spectrum, the spacings between the eigenvalues in the bulk of the spectrum or the behavior of
the extreme eigenvalues. 
Two main models have been considered, invariant matrices and Wigner matrices.
In the invariant matrix models, the matrix law is unitary
invariant and the eigenvalue joint distribution can be written explicitly in terms of a
given potential. In the
Wigner models, the matrix entries are independent (up to symmetry conditions). The case where the entries are
Gaussian is the only model belonging to both types. In the latter case,
the joint distribution of the eigenvalues is thus
explicitly known and the previous statistics have been completely studied.
One main focus of random
matrix theory in the past decades was to prove that these asymptotic behaviors were the same for non-Gaussian
matrices (see for instance \cite{AnGuZe_2010_book}, \cite{BaSi_2010_book} and \cite{PaSh_2011_book}).

However, in several fields such as
computer science or statistics for example, asymptotic statements are often
not enough, and more quantitative finite range results are required.
Several recent developments have thus been concerned with non-asymptotic random
matrix theory towards quantitative bounds
(for instance on the probability for a certain event to occur) which are valid for all $N$,
where $N$ is the size of the given matrix. See for example \cite{Ve_2012_non_asymptotic} for an introduction to some problems considered in non-asymptotic random matrix theory.
In this paper, we investigate in this respect variance bounds on the eigenvalues of
families of Wigner random matrices. 

Wigner matrices are Hermitian or real symmetric
matrices $M_N$ such that, if
$M_N$ is complex, for $i<j$, the real and imaginary parts of $(M_N)_{ij}$ are independent and identically
distributed (iid) with mean $0$ and variance
$\frac{1}{2}$, $(M_N)_{ii}$ are iid, with mean $0$ and variance $1$. In the real case, $(M_n)_{ij}$ are iid,
with mean $0$ and variance $1$ and $(M_N)_{ii}$ are iid, with mean $0$ and variance $2$. In both cases, set
$W_N=\frac{1}{\sqrt{N}}M_N$.
An important example of Wigner matrices is the case where the entries are Gaussian. If $M_N$ is complex, then
it belongs to the so-called Gaussian Unitary Ensemble (GUE). If it is real, it belongs to the Gaussian
Orthogonal Ensemble (GOE).
The matrix $W_N$ has $N$ real eigenvalues $\lambda_1 \leqslant \cdots \leqslant \lambda_N$.
In the Gaussian case, the
joint law of the eigenvalues is known, allowing for complete descriptions of their limiting behavior both in
the global and local regimes (see for example \cite{AnGuZe_2010_book}, \cite{BaSi_2010_book} and \cite{PaSh_2011_book}).

Among universality results, at the global level, the classical Wigner's Theorem states that the empirical
distribution $L_N=\frac{1}{N}\sum_{j=1}^N\delta_{\lambda_j}$ on the eigenvalues of $W_N$ converges
weakly almost surely to the semicircle law $ d \rho_{sc}(x) =
\frac{1}{2\pi}\sqrt{4-x^2}\mathbbm{1}_{[-2,2]}(x) dx$ (see for example \cite{BaSi_2010_book} for a proof in
the more general setting). This gives the global asymptotic behavior of the spectrum.
However, this theorem is not enough to deduce some information on individual eigenvalues. Define, for
all $1\leqslant j \leqslant N$, the theoretical
location of the $j^{\textrm{th}}$ eigenvalue $\gamma_j$ by
$\int_{-2}^{\gamma_j}d\rho_{sc}(x)=\frac{j}{N}$. Bai and Yin proved in \cite{BaYi_1988_largest} with
assumptions on higher moments that almost surely the smallest and the largest eigenvalues converge to their
theoretical locations, which means that $\lambda_N \to 2$ and $\lambda_1 \to -2$ almost surely (see also
\cite{BaSi_2010_book} for a proof of this theorem). From this almost sure convergence of the extreme
eigenvalues and Wigner's Theorem, it is possible to deduce information on individual eigenvalues in the bulk
of the spectrum. Indeed, according to the Glivenko-Cantelli Theorem (see for example
\cite{Du_2002_analysis_proba}), the normalized eigenvalue function $\frac{1}{N}\mathcal{N}_x$, where
$\mathcal{N}_x$ is the number of eigenvalues which are in $(-\infty,x]$, converges uniformly on $\mathbb{R}$
almost surely to the distribution function of the semicircle law $G$ (with no more assumptions on the matrix
entries). Then, using that $\frac{1}{N}\mathcal{N}_{\lambda_j}=\frac{j}{N}=G(\gamma_j)$
together with crude bounds on the semicircle density function and the fact that $\lambda_j$ is almost surely
between $-2-\varepsilon$ and $2+\varepsilon$ shows that almost surely $\lambda_j-\gamma_j \to 0$ uniformly for
$\eta N \leqslant j \leqslant (1-\eta)N$ (for any fixed $\eta>0$).

At the fluctuation level, eigenvalues inside the bulk and at the edge of the spectrum do not have the same behavior.
Tracy and Widom showed in \cite{TrWi_1994_airy} that the largest eigenvalue fluctuates around $2$ according to
the so-called Tracy-Widom law $F_2$. Namely,
\[ N^{2/3}(\lambda_N-2) \to F_2 \]
in distribution as $N$ goes to infinity. 
They proved this result for Gaussian matrices of the GUE,
later extended to families of non-Gaussian Wigner matrices by
Soshnikov (see \cite{So_1999_universality}). Recent results by Tao and Vu (see \cite{TaVu_2010_wigner_edge}) and
by Erd\"{o}s, Yau and Yin (see \cite{ErYaYi_2010_rigidity}) provide alternate proofs of this fact for larger
families of Wigner matrices. According to this asymptotic property, the variance of the largest eigenvalue
$\lambda_N$ is thus of the order
of $N^{-4/3}$. In the bulk, Gustavsson proved in \cite{Gu_2005_fluctuations}
again for the GUE, that, for any fixed $\eta>0$ and
all $\eta N \leqslant j \leqslant (1-\eta)N$,
\begin{equation}\label{gustavsson_bulk}
 \frac{\lambda_j-\gamma_j}{\sqrt{\frac{2\log N}{(4-\gamma_j^2)N^2}}} \to \mathcal{N}(0,1)
\end{equation}
in
distribution as $N$ goes to infinity. This result was extended by Tao and Vu in
\cite{TaVu_2011_wigner} to large families of non-Gaussian Wigner
matrices. The variance of an eigenvalue $\lambda_j$ in the bulk is thus of the order of
$\frac{\log N}{N^2}$.
Right-side intermediate eigenvalues consist in the $\lambda_j$'s with $\frac{N}{2} \leqslant j \leqslant N$
such
that~$\frac{j}{N} \to 1$
but~$N-j \to \infty$ as $N$ goes to infinity (the left-side can be deduced by
symmetry). Gustavsson proved a Central Limit Theorem for these eigenvalues (see
\cite{Gu_2005_fluctuations}) from which their variance is guessed to be of the order of
$\frac{\log(N-j)}{N^{4/3}(N-j)^{2/3}}$. This result was again
extended to large classes of Wigner matrices by Tao
and Vu in \cite{TaVu_2010_wigner_edge}.

The previous results are however asymptotic. As announced, the
purpose of this work is to provide quantitative bounds on the variance of the eigenvalues
of the correct order, in the bulk and at the edge of the spectrum, as well as for some intermediate eigenvalues. 
In the statements below, $M_N$ is a complex (respectively real) Wigner
matrix satisfying condition $(C0)$. This condition, which will be detailed in Section \ref{tools_Wigner},
provides an exponential decay of the matrix entries. Assume furthermore that the entries
of $M_N$ have the same first
four moments as the entries of a GUE matrix (respectively GOE). Set $W_N=\frac{1}{\sqrt{N}}M_N$.

\begin{thm}[in the bulk]\label{thm_bulk}
For all $0 <\eta \leqslant \frac{1}{2}$, there exists a constant $C(\eta)>0$ such that, for all $N \geqslant 2$ and for all $\eta N
\leqslant j \leqslant (1-\eta)N$,
\begin{equation}
\Var(\lambda_j) \leqslant C(\eta)\frac{\log N}{N^2}.
\end{equation}
\end{thm}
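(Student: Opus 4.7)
The route is the three-step one announced in the introduction. First handle the GUE, where determinantal formulas give direct access to the variance of the counting function. Then transfer the bound to any Hermitian Wigner matrix satisfying $(C0)$ with matching four moments, via the Tao--Vu Four Moment Theorem combined with the Erd\H{o}s--Yau--Yin rigidity estimates quoted in the introduction. Finally, deduce the real symmetric case from the Hermitian one by an interlacing argument between the eigenvalues of coupled real and complex matrices.

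\textbf{The GUE case.} For the GUE the eigenvalue process is determinantal with the Hermite kernel, so $\Var(\mathcal{N}_x)$ admits an exact double-integral expression that the standard sine-kernel asymptotics bound by $C(\eta)\log N$ uniformly for $x$ in compact subsets of $(-2,2)$. The elementary identity $\{\lambda_j \leq x\} = \{\mathcal{N}_x \geq j\}$ together with the positivity $\rho_{sc}(\gamma_j) \geq c(\eta) > 0$ in the bulk translate this estimate into a Gaussian-type concentration $\P\bigl(|\lambda_j - \gamma_j| > t\sqrt{\log N}/N\bigr) \leq e^{-ct^2}$ valid on a polynomial range of $t$, supplemented by a crude tail beyond it coming from rigidity or from the large deviation bounds of Bai--Yin type. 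Integrating this tail yields the target $\Var(\lambda_j) \leq C(\eta) \log N / N^2$ in the GUE case.

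\textbf{Transfer and main obstacle.} To extend to the stated Wigner family one applies the Four Moment Theorem to the observables $\lambda_j$ and $\lambda_j^2$, which gives $|\esp_W \lambda_j^k - \esp_{\mathrm{GUE}} \lambda_j^k| \leq N^{-c}$ for $k=1,2$, an error comfortably smaller than the target $\log N/N^2$; subtracting then transports the GUE variance bound to $W_N$. The real case is obtained from the Hermitian one by an interlacing formula relating the eigenvalues of a real symmetric Wigner matrix to those of an associated Hermitian matrix of comparable size, which costs only a constant factor in the bulk variance. The technical obstacle is that the Four Moment Theorem is stated for bounded smooth test functions with controlled derivatives, whereas $\lambda \mapsto \lambda^2$ is unbounded. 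The fix is to truncate $\lambda_j$ by a smooth cutoff supported in an interval of size $N^{-1+\varepsilon}$ around $\gamma_j$ and to control the complement via the Erd\H{o}s--Yau--Yin rigidity estimates and the exponential decay guaranteed by $(C0)$; calibrating the cutoff so that its derivatives remain within the admissible range for Tao--Vu while the rigidity tail stays below the $\log N/N^2$ threshold is the delicate point of the argument, after which the proof reduces to the routine combination of the quoted inputs.
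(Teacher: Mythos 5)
Your overall architecture coincides with the paper's (GUE first through the eigenvalue counting function, transfer by the Four Moment Theorem combined with Erd\H{o}s--Yau--Yin rigidity via a truncated quadratic observable, real case via interlacing), but three steps contain genuine gaps. First, in the GUE case, a bound $\Var(\mathcal{N}_x)\leqslant C(\eta)\log N$ together with the identity $\{\lambda_j\leqslant x\}=\{\mathcal{N}_x\geqslant j\}$ does \emph{not} "translate into" the Gaussian-type concentration you write down: Chebyshev only gives tails of order $\log N/u^2$, and integrating such tails up to the cut-off produces $(\log N)^2/N^2$, one logarithm worse than the target. The missing ingredient is the determinantal fact that $\mathcal{N}_x$ has the same distribution as a sum of independent Bernoulli variables, to which Bernstein's inequality is applied (after recentring at $N\rho_{sc}((-\infty,x])$ using the G\"otze--Tikhomirov bound $\sup_t|\esp[\mathcal{N}_t]-N\rho_t|\leqslant C$); only this yields the exponential deviation inequality whose integration gives $\log N/N^2$.

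Second, in the transfer step your opening claim is false: the Four Moment Theorem produces an error $N^{-c_0}$ with $c_0$ a \emph{small} unspecified constant, which is not "comfortably smaller" than $\log N/N^2$ -- a direct comparison of $\esp[\lambda_j^2]$ would be useless. The argument is saved only because the theorem is applied to the rescaled observable $\psi\big((x-N\gamma_j)/R_N^{(j)}\big)$, with $\psi$ smooth, compactly supported and quadratic near $0$ and $R_N^{(j)}$ the rigidity scale (of size $(\log N)^{C\log\log N}$ in the bulk, or your $N^{\varepsilon}$): undoing the rescaling multiplies the $N^{-c_0}$ comparison error by $(R_N^{(j)}/N)^2$, giving $O\big((R_N^{(j)})^2N^{-2-c_0}\big)=o(\log N/N^2)$, while the truncation error is controlled by the rigidity probability and a crude moment bound such as $\esp[\lambda_j^4]\leqslant 3N$. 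This rescaling gain is exactly the point you leave as "delicate calibration", so as written the decisive estimate is asserted rather than proved. Third, the real case: there is no interlacing between a general real Wigner matrix and an "associated Hermitian matrix" with matching moments, and eigenvalue interlacing does not transfer variances "up to a constant factor". The correct route (the paper's) uses the Forrester--Rains identity $\mathcal{N}_t(W_N^{\mathbb{C}})\overset{d}{=}\tfrac12\big(\mathcal{N}_t(W_N^{\mathbb{R}})+\mathcal{N}_t(\tilde W_N^{\mathbb{R}})\big)+O(1)$ for two \emph{independent} GOE matrices, valid only for the Gaussian ensembles, to transfer the counting-function deviation inequality from GUE to GOE; the GOE variance bound then follows as in the GUE case, and the extension to general real Wigner matrices is done by the real versions of the Four Moment and localization theorems, not by interlacing.
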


\begin{thm}[at the edge]\label{thm_edge}
There exists a universal constant $C>0$ such that, for all $N \geqslant 1$,
\begin{equation}
\Var(\lambda_1) \leqslant CN^{-4/3} \quad \textrm{and} \quad \Var(\lambda_N) \leqslant CN^{-4/3}.
\end{equation}

\end{thm}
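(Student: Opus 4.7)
The plan is to prove Theorem \ref{thm_edge} in three stages: first for the GUE via direct tail integration; then for arbitrary Hermitian Wigner matrices satisfying $(C0)$ with matching four moments, using the Tao--Vu Four Moment Theorem together with the Erd\H{o}s--Yau--Yin rigidity estimates; and finally descending to the real symmetric case via an interlacing identity between real symmetric and Hermitian spectra.

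For the GUE, I would rely on uniform-in-$N$ tail bounds for the extreme eigenvalues, of the form $\P(\lambda_N \geq 2 + t N^{-2/3}) \leq C e^{-c t^{3/2}}$ and $\P(\lambda_N \leq 2 - t N^{-2/3}) \leq C e^{-c t^{3}}$, available either from Ledoux--Rider concentration arguments or from the Fredholm determinant structure underlying Tracy--Widom convergence (analogous bounds hold for $\lambda_1$ by the reflection symmetry of the GUE spectrum). Since $\Var(\lambda_N) \leq \esp[(\lambda_N - 2)^2] = \int_0^\infty 2s\,\P(|\lambda_N - 2| > s)\,ds$, the substitution $s = u N^{-2/3}$ reduces this to $N^{-4/3}$ times a convergent integral of the tail, producing the desired bound. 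To extend to $(C0)$ Hermitian Wigner matrices, fix $\varepsilon>0$ and a smooth cutoff $\chi$ equal to $1$ on $\{|x-2| \leq N^{-2/3+\varepsilon}\}$ and supported slightly beyond, and decompose
$$\esp[(\lambda_N - 2)^2] = \esp\bigl[(\lambda_N - 2)^2 \chi(\lambda_N)\bigr] + \esp\bigl[(\lambda_N - 2)^2\bigl(1 - \chi(\lambda_N)\bigr)\bigr].$$
The first term is an expectation of a sufficiently smooth function of $\lambda_N$, so the Four Moment Theorem matches it to the corresponding GUE quantity up to a factor $N^{-c}$, and Step~1 controls the GUE value by $C N^{-4/3}$. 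The second term is controlled by the Erd\H{o}s--Yau--Yin rigidity estimate, which gives super-polynomial decay of $\P(|\lambda_N-2|>N^{-2/3+\varepsilon})$, combined with an a priori polynomial moment bound for $\lambda_N$ guaranteed by $(C0)$.

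The real symmetric case is then obtained via an interlacing sandwich relating the spectrum of a real symmetric Wigner matrix to that of an associated Hermitian matrix for which Step~2 already applies, so that the variance bound transfers up to a constant. The principal technical obstacle lies in Step~2: the Four Moment Theorem only provides comparison for smooth test functions, whereas the variance is an intrinsically unsmoothed second moment. The delicate point is calibrating the cutoff scale $N^{-2/3+\varepsilon}$ against the rigidity window and the regularity required by the Four Moment Theorem so that both the smoothing error and the truncation tail error are bounded by $C N^{-4/3}$, rather than merely being negligible on a coarser polynomial scale.
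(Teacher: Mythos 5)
Your Step~1 (GUE via Ledoux--Rider tail bounds plus direct integration of $\esp[(\lambda_N-2)^2]$) is exactly what the paper does, and your high-level plan for Step~2 (Four Moment Theorem plus rigidity) is the right framework. But there is a genuine gap in Step~2 as you have written it, and your Step~3 takes a route the paper does not use and that does not obviously work.

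The gap in Step~2: the Four Moment Theorem gives an \emph{additive} error of size $N^{-c_0}$ when comparing $\esp[G(\lambda_N(A_N))]$ and $\esp[G(\lambda_N(A_N'))]$, where $c_0$ is a small universal constant. If, as you propose, you apply it directly to the truncated second moment $G(y)=(y/N-2)^2\chi(y/N)$, you get
\[
\esp\bigl[(\lambda_N-2)^2\chi(\lambda_N)\bigr]=\esp\bigl[(\lambda_N'-2)^2\chi(\lambda_N')\bigr]+O(N^{-c_0}),
\]
and since the main term is $O(N^{-4/3})$ while $c_0\ll 4/3$, the error $N^{-c_0}$ \emph{dominates}. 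No choice of cutoff exponent $\varepsilon$ fixes this; it is not a calibration problem. The paper's resolution is to apply the Four Moment Theorem not to the raw truncated second moment but to a \emph{normalized} test function
$G_j(x)=\psi\bigl((x-N\gamma_j)/R_N^{(j)}\bigr)$
taking values in $[0,1]$ (with $\psi(x)=x^2/10$ near $0$), where $R_N^{(j)}\asymp N^{1/3}(\log N)^{C\log\log N}$ at the edge. One then recovers
$\esp[(\lambda_j-\gamma_j)^2]$ by multiplying $\esp[G_j(\lambda_j(A_N))]$ by $10(R_N^{(j)})^2/N^2$, so the Four Moment error $N^{-c_0}$ becomes $(R_N^{(j)})^2N^{-c_0-2}=O(N^{-4/3-c_0+o(1)})$, which is genuinely $o(N^{-4/3})$. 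This normalization step is the crucial missing idea; without it the comparison argument does not close.

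Step~3 is also off target. The paper does \emph{not} use interlacing for the edge: the Ledoux--Rider small-deviation inequalities hold for all $\beta$-ensembles, in particular for the GOE, so the Gaussian edge bound $\Var(\lambda_N)\leq CN^{-4/3}$ is available in the real case directly. The extension from GOE to general real Wigner matrices then proceeds by the same Four Moment plus Localization argument (both theorems have real-symmetric versions). Interlacing (Forrester--Rains) is used in the paper only for the bulk and intermediate regimes, and specifically to relate the GUE and GOE \emph{counting functions}, exploiting the determinantal structure on the GUE side; it is not a device for transferring a bound from a general Hermitian Wigner matrix to a general real symmetric one, which is what your Step~3 would require.
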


\begin{thm}[between the bulk and the edge]\label{thm_intermediate}
For all $0<\eta \leqslant \frac{1}{2}$ and for all $K>20\sqrt{2}$, there exists a constant $C(\eta,K)>0$ such that,
for all $N \geqslant 2$, for all $K\log N \leqslant j \leqslant \eta N$ and $(1-\eta)N \leqslant j \leqslant
N-K\log N$,
\begin{equation}
\Var(\lambda_j) \leqslant C(\eta,K)\frac{\log\big(\min(j,N-j)\big)}{N^{4/3}\big(\min(j,N-j)\big)^{2/3}}.
\end{equation}
\end{thm}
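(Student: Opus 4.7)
The plan is to follow the same two-step strategy used for the bulk and the edge: first prove the variance bound for the GUE, then transfer it to general Wigner matrices via the Tao--Vu Four Moment Theorem combined with the Erd\H{o}s--Yau--Yin rigidity estimates. By the symmetry of the semicircle law it is enough to treat $K\log N \leqslant j \leqslant \eta N$.

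For the GUE, I would start from the identity $\{\lambda_j \leqslant x\} = \{\mathcal{N}_x \geqslant j\}$, which yields, for any $t>0$,
\[
\P\!\big(|\lambda_j-\gamma_j|\geqslant t\big) \leqslant \P\!\big(\mathcal{N}_{\gamma_j-t}\geqslant j\big) + \P\!\big(\mathcal{N}_{\gamma_j+t}\leqslant j-1\big).
\]
The GUE counting function $\mathcal{N}_x$ is equal in distribution to a sum of $N$ independent Bernoulli variables with mean $NG(x)$, and at the intermediate scale its variance is of order $\log j$. A Bennett/Bernstein inequality then produces a sub-Gaussian tail for $\mathcal{N}_x-\esp[\mathcal{N}_x]$; converting this back through the local density $\rho_{sc}(\gamma_j)\asymp (j/N)^{1/3}$ valid for $j\leqslant \eta N$, I expect an estimate of the form
\[
\P\!\big(|\lambda_j-\gamma_j|\geqslant t\big) \leqslant C\exp\!\Big(-c\,\frac{N^{4/3}\,j^{2/3}\,t^2}{\log j}\Big)
\]
up to some scale $t_0$ of order one. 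Plugging this into $\Var(\lambda_j)\leqslant \int_0^\infty 2t\,\P(|\lambda_j-\gamma_j|\geqslant t)\,dt$ then delivers the claimed bound for the GUE; the tail beyond $t_0$ is absorbed using the crude deterministic bound $|\lambda_j|\leqslant \|W_N\|$ together with the standard moment estimates on $\|W_N\|$.

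To pass from the GUE to a Wigner matrix whose entries match the first four moments of the GUE, I would apply the Tao--Vu Four Moment Theorem to a smooth truncation of $(\lambda_j-\gamma_j)^2$ restricted to the rigidity window provided by the Erd\H{o}s--Yau--Yin localization estimates. Outside this window the probability is so small (with a sub-exponential rate in $\log N$) that its contribution to the variance is negligible and may be controlled by the same crude operator-norm bound as in the GUE step.

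The main obstacle is in the GUE step: producing a Bernstein-type tail for $\mathcal{N}_x$ whose Gaussian constant is $\log j$ rather than $\log N$. This requires carefully identifying, among the Bernoulli summands, only those whose variances genuinely contribute near $\gamma_j$ in the intermediate regime. It is precisely here that the lower bound $j\geqslant K\log N$ with $K>20\sqrt{2}$ should enter: once $j$ drops below $K\log N$, the typical fluctuations of $\mathcal{N}_x$ become comparable to the expected gap to $j$, and the Bennett exponent leaves its Gaussian regime. The explicit value $20\sqrt{2}$ will emerge when balancing Bernstein's exponential decay against the polynomial prefactors arising when bounding the two tail probabilities uniformly over the required range of $t$.
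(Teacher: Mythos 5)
Your overall strategy is the same as the paper's: for the GUE, pass from $\lambda_j$ to the eigenvalue counting function (which by the determinantal structure is distributed as a sum of independent Bernoulli variables), apply Bernstein, use Gustavsson's estimate that the variance of $\mathcal{N}_x$ in the intermediate regime is $O(\log(\min(j,N-j)))$, integrate the resulting deviation bound, and then transfer to Wigner matrices with a smoothly truncated $(\lambda_j-\gamma_j)^2$ via Tao--Vu together with Erd\H{o}s--Yau--Yin rigidity. The paper's proof (Theorem \ref{thm_GUE_intermediate}, Proposition \ref{prop_eigenvalue_deviation_intermediate}, and Section \ref{intermediate_Wigner}) follows exactly this route.

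Two points in your sketch are imprecise and would need repair before the argument closes. First, the Bernstein inequality does not give the purely sub-Gaussian bound
\[
\P\big(|\lambda_j-\gamma_j|\geqslant t\big) \leqslant C\exp\Big(-c\,\frac{N^{4/3}j^{2/3}t^{2}}{\log j}\Big)
\]
all the way up to a scale $t_0\asymp 1$: the exponent has the Bernstein form $u^2/(2\sigma^2+u)$, and the sub-Gaussian regime holds only for $u\lesssim \sigma^2 \asymp \log j$, i.e.\ for $t\lesssim \log j/(N^{2/3}j^{1/3})$, beyond which the tail is only sub-exponential. The paper therefore splits the moment integral into a sub-Gaussian region ($J_1$), a sub-exponential region ($J_2$, which contributes only an $O(1)$ term), and a far tail ($J_3$) handled by Cauchy--Schwarz together with the elementary bound $\esp[\lambda_j^4]\leqslant\esp[\Tr W_N^4]=2N+1/N$; you cannot skip the middle region. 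Second, you implicitly identify $\esp[\mathcal{N}_x]$ with $N\rho_{sc}((-\infty,x])$, but these differ by an $O(1)$ amount; the paper invokes the G\"otze--Tikhomirov estimate $\sup_t|\esp[\mathcal{N}_t]-N\rho_t|\leqslant C_1$, and this constant is exactly what forces the lower cutoff $u\geqslant c$ in the deviation inequality. Your account of where $K>20\sqrt 2$ arises (balancing the exponential decay in the far-tail region against the polynomial prefactor from the fourth-moment bound) is consistent with the paper's computation of $J_3$.
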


It should be mentioned that
Theorem \ref{thm_bulk} does not seem to be known even for Gaussian matrices.
The first step is thus to prove it for the GUE.
This will be achieved via the analysis of the eigenvalue counting function, which
due to the particular determinantal structure in this case, has the same distribution as a sum of independent
Bernoulli variables.
Sharp standard deviation inequalities are thus available in this case. These may then
be transferred to the eigenvalues in the bulk together with Gustavsson's bounds
on the variance of the eigenvalue counting function. As a result, we actually establish that
\[ E\big [ | \lambda_j - \gamma_j|^2 \big] \leqslant C(\eta)\frac{\log N}{N^2} \]
leading thus to Theorem \ref{thm_bulk} in this case. Similarly, Theorem \ref{thm_intermediate} does not seem to be known for Gaussian matrices. The proof follows exactly the same scheme and we establish that
\[E\big [ | \lambda_j - \gamma_j|^2 \big] \leqslant C(\eta,K)\frac{\log\big(\min(j,N-j)\big)}{N^{4/3}\big(\min(j,N-j)\big)^{2/3}}.\]
On the other hand,
Theorem \ref{thm_edge} for the GUE and GOE has been known for some time (see \cite{LeRi_2010_deviations}). 
On the basis of these results for the GUE (and GOE), Theorems \ref{thm_bulk}, \ref{thm_edge} and
\ref{thm_intermediate} are then extended to families of Wigner matrices by a suitable
combination of Tao and Vu's Four Moment Theorem (see \cite{TaVu_2011_wigner}, \cite{TaVu_2010_wigner_edge}) and
Erd\"os, Yau and Yin's Localization Theorem (see \cite{ErYaYi_2010_rigidity}). The basic idea is that
while the localization properties almost yield the correct order, the
Four Moment Theorem may be used to reach the optimal bounds by comparison
with the Gaussian models. Theorems \ref{thm_bulk}, \ref{thm_edge} and \ref{thm_intermediate} are established
first in the complex case. The real case is deduced by means of
interlacing formulas. Furthermore, analogous results are established for covariance matrices, for which
non-asymptotic quantitative results are needed, as they are useful in several fields,
such as compressed sensing (see \cite{Ve_2012_non_asymptotic}), wireless communication and quantitative
finance (see \cite{BaSi_2010_book}).

The method developed here do not seem powerful enough to strengthen
the variance bounds into exponential tail inequalities. In the recent contribution
\cite{TaVu_2012_deviation}, Tao and Vu proved such
exponential tail inequalities by a further refinement of the replacement method
leading to the Four Moment Theorem. While much more powerful than variance
bounds, they do not seem to yield at this moment the correct order of the variance bounds
of Theorems \ref{thm_bulk}, \ref{thm_edge} and \ref{thm_intermediate}.

As a corollary of the latter results, a bound
on the rate of convergence of the empirical spectral measure $L_N$ can be
achieved. This bound is expressed in terms of the so-called
$2$-Wasserstein distance $W_2$ between $L_N$ and the semicircle
law $\rho_{sc}$. For $p \in [1,\infty)$, the $p$-Wasserstein distance $W_p(\mu,\nu)$ between two probability measures $\mu$ and $\nu$ on $\mathbb{R}$ is defined by
\[W_p(\mu,\nu)=\inf \bigg (\int_{\mathbb{R}^2}|x-y|^p\,d\pi(x,y)\bigg)^{1/p}\]
where the infimum is taken over all probability measure $\pi$ on $\mathbb{R}^2$
such that its first marginal is $\mu$ and its second marginal is $\nu$.
Note for further purposes that the rate of convergence of this empirical distribution
has been investigated in various forms. For example, the
Kolmogorov distance between $L_N$ and $\rho_{sc}$
has been considered in this respect
in several papers (see for example \cite{GoTi_2003_convergence}, 
\cite{BoGoTi_2010_convergence_semicircle} and \cite{GoTi_2011_convergence}). It is given by
\[d_K(L_N,\rho_{sc})= \sup_{x \in \mathbb{R}}
\big|\tfrac{1}{N}\mathcal{N}_x-G(x)\big|,\] 
where $\mathcal{N}_x$ is the eigenvalue counting function and $G$ is the distribution function
of the semicircle law. More and
more precise bounds were established. For example, under the hypothesis of an exponential decay of the matrix
entries, Götze and Tikhomirov recently showed that, with high probability,
\[d_K(L_N,\rho_{sc}) \leqslant \frac{(\log N)^c}{N}\]
for some universal constant $c>0$ (see \cite{GoTi_2011_convergence}). 
The rate of convergence in terms of $W_1$, also called the Kantorovich-Rubinstein distance, was studied by Guionnet and Zeitouni in \cite{GuZe_2000_concentration_spectral_measure} and recently
by Meckes and Meckes in \cite{MeMe_2011_convergence_spectral_measure}. In
\cite{MeMe_2011_convergence_spectral_measure}, the authors proved that $\esp[W_1(L_N,\esp[L_N])] \leqslant
CN^{-2/3}$. The following is concerned with 
the distance between $L_N$ and $\rho_{sc}$ and strengthens the preceding conclusion on
$W_1$.
\begin{cor}\label{cor_wasserstein_2} There exists a 
numerical constant $C>0$ such that, for all $N \geqslant 2$,
\begin{equation}
\esp\big[W_2^2(L_N,\rho_{sc})\big]\leqslant C\frac{\log N}{N^2}.
\end{equation}
\end{cor}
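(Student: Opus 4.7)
The plan is to reduce the expected Wasserstein distance to expected squared deviations of the eigenvalues from their theoretical locations, which is exactly what Theorems \ref{thm_bulk}, \ref{thm_edge} and \ref{thm_intermediate} control. The starting point is the one-dimensional representation
\[
W_2^2(L_N,\rho_{sc}) \;=\; \int_0^1 \bigl(F_{L_N}^{-1}(t)-G^{-1}(t)\bigr)^2\,dt,
\]
where $G$ is the distribution function of the semicircle law. Since $F_{L_N}^{-1}(t)=\lambda_j$ on $((j-1)/N,j/N]$, and $G^{-1}(j/N)=\gamma_j$ so that $G^{-1}(t)\in[\gamma_{j-1},\gamma_j]$ there (with $\gamma_0=-2$), the elementary inequality $(a-c)^2\leqslant 2(a-b)^2+2(b-c)^2$ gives the pointwise bound
\[
W_2^2(L_N,\rho_{sc}) \;\leqslant\; \frac{2}{N}\sum_{j=1}^N (\lambda_j-\gamma_j)^2 \;+\; \frac{2}{N}\sum_{j=1}^N (\gamma_j-\gamma_{j-1})^2.
\]

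The second, deterministic, sum is handled directly from the semicircle density: in the bulk $\gamma_j-\gamma_{j-1}\sim 1/N$, while near the edge $\gamma_j-\gamma_{j-1}\sim 1/(j^{1/3}N^{2/3})$, so that $\frac{1}{N}\sum_{j=1}^N(\gamma_j-\gamma_{j-1})^2=O(1/N^2)$, well within the target. For the first sum, I would take expectation and split the index set into three regions. For $j\in[\eta N,(1-\eta)N]$, Theorem \ref{thm_bulk} (in the strengthened form $\esp[|\lambda_j-\gamma_j|^2]\leqslant C(\eta)\log N/N^2$ mentioned in the text) yields a contribution $O(\log N/N^2)$ after summing and dividing by $N$. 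For $K\log N\leqslant j\leqslant \eta N$ and symmetrically on the right, Theorem \ref{thm_intermediate} gives
\[
\frac{1}{N}\sum_{j=K\log N}^{\eta N}\frac{C\,\log j}{N^{4/3}j^{2/3}}\;\leqslant\;\frac{C\log N}{N^{7/3}}\int_1^{\eta N}\!\!\frac{dj}{j^{2/3}}\;=\;O\!\left(\frac{\log N}{N^2}\right),
\]
matching the desired order.

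The remaining indices, $1\leqslant j\leqslant K\log N$ and the symmetric block near $N$, are the delicate piece, since Theorem \ref{thm_edge} only controls $\lambda_1$ and $\lambda_N$ directly. There I would exploit the ordering $\lambda_1\leqslant\lambda_j\leqslant\lambda_{K\log N}$ and $\gamma_1\leqslant\gamma_j\leqslant\gamma_{K\log N}$ to obtain the deterministic sandwich $|\lambda_j-\gamma_j|\leqslant |\lambda_1-\gamma_1|+|\lambda_{K\log N}-\gamma_{K\log N}|+|\gamma_{K\log N}-\gamma_1|$. Squaring, taking expectation, using Theorem \ref{thm_edge} for the $\lambda_1$ term, Theorem \ref{thm_intermediate} for the $\lambda_{K\log N}$ term, and the edge scaling $|\gamma_{K\log N}-\gamma_1|\leqslant C(K\log N/N)^{2/3}$, one gets $\esp[(\lambda_j-\gamma_j)^2]\leqslant C(\log N)^{4/3}/N^{4/3}$ for every such $j$. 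Summing over the $O(\log N)$ indices and dividing by $N$ gives $O((\log N)^{7/3}/N^{7/3})$, which is negligible compared to $\log N/N^2$. Adding the three regional estimates together with the deterministic quantile gap sum produces the announced bound.

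The main obstacle, as above, is the extreme edge block $j\leqslant K\log N$: the variance statement of Theorem \ref{thm_edge} is given only for the single endpoint eigenvalue, so one must either invoke the interlacing/monotonicity argument sketched above or a rigidity-type input from \cite{ErYaYi_2010_rigidity}. The former has the advantage of keeping the proof entirely inside the framework already developed in the paper, at the modest cost of a slightly suboptimal (but still admissible) pointwise bound on the edge eigenvalues.
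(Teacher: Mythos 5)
Your proposal is correct and, up to the handling of the extreme blocks, follows the same route as the paper: the quantile representation of $W_2$, the pointwise bound by $\frac{2}{N}\sum_j(\lambda_j-\gamma_j)^2$ plus the deterministic gap sum $\frac{2}{N}\sum_j(\gamma_j-\gamma_{j-1})^2=O(N^{-2})$ (this is exactly Proposition \ref{prop_w2}), and the regional split using Theorem \ref{thm_bulk} in the bulk and Theorem \ref{thm_intermediate} in the intermediate ranges. The genuine difference is the block $1\leqslant j\leqslant K\log N$ (and its mirror): the paper invokes a crude form of the Localization Theorem \ref{thm_localization}, namely $\P(|\lambda_j-\gamma_j|\geqslant N^{-1/2})\leqslant Ce^{-(\log N)^{c\log\log N}}$, combined with the Cauchy--Schwarz inequality and the fourth-moment trace bound \eqref{bound_moment}, to get $\esp[(\lambda_j-\gamma_j)^2]\leqslant C/N$ per term; you instead use the deterministic ordering $\lambda_1\leqslant\lambda_j\leqslant\lambda_{\lceil K\log N\rceil}$, $\gamma_1\leqslant\gamma_j\leqslant\gamma_{\lceil K\log N\rceil}$, together with the already-established edge and intermediate mean-square bounds and the scaling $\gamma_{\lceil K\log N\rceil}-\gamma_1\leqslant C(K\log N/N)^{2/3}$, obtaining $\esp[(\lambda_j-\gamma_j)^2]\leqslant C(\log N)^{4/3}N^{-4/3}$ per term, which is in fact sharper than the paper's $C/N$ and makes the extreme blocks negligible. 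Your route has the advantage of staying entirely within the theorems already proved (no separate appeal to localization at this stage, though localization is of course used upstream in their proofs), at the cost of two small points you should make explicit: Theorems \ref{thm_edge} and \ref{thm_intermediate} are stated as variance bounds, so you need the mean-square versions $\esp[(\lambda_1-\gamma_1)^2]$, $\esp[(\lambda_j-\gamma_j)^2]$, which the paper's proofs do establish (Sections \ref{edge_wigner} and \ref{intermediate_Wigner}); and for $\lambda_1$ the reference point in the edge result is $-2$ rather than $\gamma_1$, so one must add the deterministic bias $|\gamma_1+2|\leqslant CN^{-2/3}$, which is harmless since its square is $O(N^{-4/3})$.
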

The proof of this corollary relies on the fact that $\esp\big[W_2^2(L_N,\rho_{sc})\big]$
is bounded above, up to a constant, by the sum of the expectations
$\esp\big[(\lambda_j-\gamma_j)^2\big]$. The previously established bounds
then easily yield the result.

Turning to the organization of the paper, Section~\ref {GUE} describes Theorems \ref{thm_bulk}, \ref{thm_edge}
and \ref{thm_intermediate} in the GUE case. Section~\ref{Wigner} emphasizes to start with the Four Moment
Theorem of Tao and Vu (see \cite{TaVu_2011_wigner} and \cite{TaVu_2010_wigner_edge}) and the Localization
Theorem of Erdös, Yau and Yin (see \cite{ErYaYi_2010_rigidity}). On the basis of these results and the GUE
case, the main Theorems \ref{thm_bulk}, \ref{thm_edge} and \ref{thm_intermediate} are then established for
families of Wigner matrices. Section~\ref{real} is devoted to the corresponding statements for real matrices,
while Section~\ref{covariance} describes the analogous results for covariance matrices.
Section~\ref{wasserstein} deals with Corollary \ref {cor_wasserstein_2} and the rate of convergence of $L_N$
towards $\rho_{sc}$ in terms of $2$-Wasserstein distance.

Throughout this paper, $C(\cdot)$ and $c(\cdot)$ will denote positive constants which depend on the specified
quantities and may differ from one line to another.

\section{Deviation inequalities and variance bounds in the GUE case} \label {GUE}

The results and proofs developped in this section for the GUE model
heavily rely on its determinantal structure which allows for a complete
description of the joint law of the eigenvalues (see \cite{AnGuZe_2010_book}). In particular the 
eigenvalue counting function is known to have the same distribution as a sum of independent Bernoulli
variables (see \cite{BeKrPeVi_2006_determinantal}, \cite{AnGuZe_2010_book}), whose mean and variance were
computed by Gustavsson (see \cite{Gu_2005_fluctuations}). Deviation inequalities for individual eigenvalues
can thus be established, leading to the announced bounds on the variance.

\subsection{Eigenvalues in the bulk of the spectrum}\label{bulk_GUE}

The aim of this section is to establish the following result.

\begin{thm}\label{thm_GUE_bulk}
Let $M_N$ be a GUE matrix. Set $W_N=\frac{1}{\sqrt{N}}M_N$.
For any $0<\eta \leqslant \frac{1}{2}$, there exists a constant $C(\eta)>0$
such that for all $N \geqslant 2$ and all $\eta N
\leqslant j \leqslant (1-\eta)N$,
\begin{equation}
\esp\big[|\lambda_j-\gamma_j|^2\big] \leqslant C(\eta)\frac{\log
N}{N^2}.
\end{equation}
In particular,
\begin{equation}
\Var(\lambda_j) \leqslant C(\eta)\frac{\log N}{N^2}.
\end{equation}
\end{thm}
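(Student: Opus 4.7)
The plan is to exploit the determinantal structure of the GUE in order to reduce the problem to sharp concentration of a sum of independent Bernoulli variables, and then to transfer this concentration to the individual eigenvalue $\lambda_j$ via duality with the counting function.

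\emph{Step 1 (Determinantal reduction and Gustavsson's input).} For the GUE, the eigenvalue counting function $\mathcal{N}_x = \#\{k : \lambda_k \leq x\}$ is a determinantal linear statistic, so it has the same distribution as a sum $\sum_{k=1}^N \xi_k(x)$ of independent Bernoulli variables. Combining this representation with Gustavsson's computations yields, uniformly for bulk indices,
\[
\bigl|\esp[\mathcal{N}_{\gamma_j}] - j\bigr| \leq C, \qquad \Var(\mathcal{N}_{\gamma_j}) \leq C(\eta)\log N,
\]
together with the more general expansion $\esp[\mathcal{N}_{\gamma_j \pm t}] = j \pm N\rho_{sc}(\gamma_j)\,t + O(1)$ on a mesoscopic window in $t$.

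\emph{Step 2 (Bernstein plus duality).} Next, apply Bernstein's inequality to the Bernoulli sum and convert the resulting concentration of $\mathcal{N}_x$ into a tail bound on $\lambda_j - \gamma_j$ through the elementary identity $\{\lambda_j \leq y\} = \{\mathcal{N}_y \geq j\}$. Using that $\rho_{sc}(\gamma_j) \geq c(\eta) > 0$ for $j$ in the bulk, the Bernstein estimate produces
\[
\P\bigl(|\lambda_j - \gamma_j| \geq t\bigr) \leq 2\exp\Bigl(-\frac{c(\eta)\, N^2 t^2}{\log N + N t}\Bigr)
\]
on the range of $t$ where the linearization above remains valid.

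\emph{Step 3 (Integration of the tail).} Writing
\[
\esp\bigl[|\lambda_j - \gamma_j|^2\bigr] = \int_0^\infty 2t\, \P\bigl(|\lambda_j - \gamma_j| \geq t\bigr)\, dt,
\]
I split at $t_0 = K\sqrt{\log N}/N$: below $t_0$ the trivial bound $\P \leq 1$ contributes $t_0^2 = O(\log N/N^2)$; between $t_0$ and a small constant depending on $\eta$, the sub-Gaussian regime of the Bernstein tail integrates to the same order $O(\log N/N^2)$; and for $t$ of order $1$, where the linearization fails, I invoke known sub-Gaussian tail bounds for the extreme eigenvalues of the GUE (for instance from Ledoux--Rider, referenced as \cite{LeRi_2010_deviations} in the paper) to absorb the residual contribution in $O(N^{-2})$.

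\emph{Main obstacle.} The delicate point is Step 2: one needs control of $\esp[\mathcal{N}_{\gamma_j \pm t}]$ and of $\Var(\mathcal{N}_{\gamma_j \pm t})$ with constants that are uniform both in $j$ throughout the bulk and in $t$ over a range large enough to reach the relevant concentration scale $\sqrt{\log N}/N$. This requires slightly more than the pointwise asymptotic statements of Gustavsson, and constitutes the technical heart of the argument; the rest of the proof is then a careful but essentially routine integration.
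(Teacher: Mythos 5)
Your proposal follows essentially the same route as the paper: determinantal structure of the GUE reduces the eigenvalue counting function $\mathcal{N}_t$ to a sum of independent Bernoullis, Bernstein's inequality combined with G\"otze--Tikhomirov's uniform control of $|\esp[\mathcal{N}_t]-N\rho_t|$ and Gustavsson's variance bound $\sigma_t^2\leqslant c_\delta\log N$ on $I_\delta$ give a tail bound on $|\lambda_j-\gamma_j|$ via the duality $\{\lambda_j\leqslant y\}=\{\mathcal{N}_y\geqslant j\}$, and then one integrates the tail. This is precisely the paper's strategy (Proposition~\ref{prop_eigenvalue_deviation_bulk} followed by the layer-cake integration of $\esp[Z^2]$ with $Z=N|\lambda_j-\gamma_j|$).

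The one genuine difference is the treatment of the far tail, and there your sketch is both different and slightly incomplete. The paper never leaves the range of validity of the Bernstein-type deviation inequality: it cuts at $v=M\log N$ (with $M$ large enough that the tail probability is $O(N^{-5})$) and controls $\int_{M\log N}^\infty 2v\,\P(Z\geqslant v)\,dv$ by Cauchy--Schwarz together with the elementary fourth-moment bound $\esp[\lambda_j^4]\leqslant\esp[\Tr(W_N^4)]=2N+\tfrac1N$, which gives $\esp[Z^4]=O(N^5)$ and hence $I_2=O(1)$. You instead propose to invoke Ledoux--Rider extreme-eigenvalue deviations ``for $t$ of order $1$.'' As stated this leaves a gap: for $t$ in the intermediate range $c'\leqslant t\lesssim 4$ the event $|\lambda_j-\gamma_j|\geqslant t$ with $\gamma_j$ in the bulk does not force an extreme eigenvalue outside a neighbourhood of $\pm 2$, so the Ledoux--Rider bound does not directly apply. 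One can patch this (e.g.\ by using the endpoint Bernstein bound at $t=c'$, which is $\exp(-\Omega(N))$, together with monotonicity of the tail, and Ledoux--Rider only for $t\gtrsim 4$), but the paper's Cauchy--Schwarz/fourth-moment argument is the cleaner way to close the integral and avoids this case analysis entirely. Everything else in your proposal matches the paper's proof.
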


As announced, the proof is based on the connection between the distribution of
eigenvalues and the eigenvalue counting function.
For every $t \in \mathbb{R}$, let $\mathcal{N}_t=\sum_{i=1}^N\mathbbm{1}_{\lambda_i \leqslant t}$
be the eigenvalue counting function. Due to the
determinantal structure of the GUE, it is known (see \cite{BeKrPeVi_2006_determinantal})
that $\mathcal{N}_t$ has the same distribution as a sum of independent Bernoulli random variables.
Bernstein's inequality for example (although other, even sharper, inequalities may be used)
may then be applied to get that for every $u \geqslant 0$,
\begin{equation}\label{bernstein}
 \P\Big(\big|\mathcal{N}_t-\esp[\mathcal{N}_t]\big|\geqslant u\Big)\leqslant 2\exp \Big
(-\frac{u^2}{2\sigma_t^2+u}\Big),
\end{equation}
where $\sigma_t^2$ is the variance of $\mathcal{N}_t$ (see for example \cite{Va_1998_asymptotic_statistics}).
Note that the upper-bound is non-increasing in $u$
while non-decreasing in the variance.
Set for simplicity $\rho_t=\rho_{sc}\big((-\infty,t]\big)$, $t \in \mathbb{R}$. It has been
shown in \cite{GoTi_2005_convergence} that for some numerical constant $C_1>0$,
\[\sup_{t \in \mathbb{R}} \big|\esp[\mathcal{N}_t]-N\rho_t\big| \leqslant C_1.\]
In particular thus, together with \eqref{bernstein}, for every $u \geqslant 0$,
\begin{equation}\label{inequality_counting_function}
 \P\Big(\big|\mathcal{N}_t-N\rho_t\big|\geqslant u+C_1\Big)\leqslant
2\exp\Big(-\frac{u^2}{2\sigma_t^2+u}\Big).
\end{equation}
As a main conclusion of the work by Gustavsson \cite{Gu_2005_fluctuations}, for every $\delta \in (0,2)$,
there exists $c_{\delta}>0$ such that
\begin{equation}\label{sup_variance}
\sup_{t \in I_{\delta}} \sigma_t^2 \leqslant c_{\delta}\log N,
\end{equation}
where $I_{\delta}=[-2+\delta,2-\delta]$.
On the basis of inequalities \eqref{inequality_counting_function} and \eqref{sup_variance},
it is then possible to derive a deviation inequality
for eigenvalues $\lambda _j$ in the bulk from their theoretical locations
$\gamma_j \in [-2,2]$, $1\leqslant j \leqslant N$, defined by
$\rho_{\gamma_j}=\frac{j}{N}$.

\begin{prop}[Deviation inequality for $\lambda_j$]\label{prop_eigenvalue_deviation_bulk}
Let $\eta \in (0,\frac{1}{2}]$ and $\eta N \leqslant j \leqslant (1-\eta) N$.
There exist $C>0$, $c>0$, $c'>0$ and $\delta \in (0,2)$ (all depending on $\eta$) such that, for all
$c \leqslant u \leqslant c'N$, 
\begin{equation}\label{deviation_bulk}
\P\Big(|\lambda_j-\gamma_j|\geqslant \frac{u}{N}\Big)\leqslant 4\exp\Big(-\frac{C^2u^2
}{2c_{\delta}\log N+Cu}\Big).
\end{equation}
\end{prop}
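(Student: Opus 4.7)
The strategy is to convert the eigenvalue deviation event $\{|\lambda_j - \gamma_j| \geq u/N\}$ into a corresponding deviation event for the counting function $\mathcal{N}_t$, and then apply the Bernstein-type tail bound \eqref{inequality_counting_function} together with the variance estimate \eqref{sup_variance}. The starting observation is the elementary equivalence $\{\lambda_j \leq t\} = \{\mathcal{N}_t \geq j\}$, which is the only probabilistic input relating the two random quantities.

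First I would fix $\delta \in (0,2)$ small enough, depending only on $\eta$, so that every $\gamma_j$ with $\eta N \leq j \leq (1-\eta)N$ lies strictly inside $I_\delta = [-2+\delta, 2-\delta]$. By choosing $c' > 0$ small enough in terms of $\eta$ and $\delta$, the perturbed points $\gamma_j \pm u/N$ still lie in $I_\delta$ whenever $u \leq c'N$, so that \eqref{sup_variance} will apply at these points. On $I_\delta$ the semicircle density is bounded below by a constant $\kappa = \kappa(\eta) > 0$, which gives by integration the quantitative comparison
\[
\rho_{\gamma_j + u/N} - \rho_{\gamma_j} \;\geq\; \kappa \, u / N,
\]
and the symmetric lower bound for $\rho_{\gamma_j} - \rho_{\gamma_j - u/N}$.

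Using $j = N\rho_{\gamma_j}$, the upper-side event becomes
\[
\{\lambda_j \geq \gamma_j + u/N\} \;=\; \{\mathcal{N}_{\gamma_j + u/N} \leq j - 1\} \;\subset\; \big\{\mathcal{N}_{\gamma_j + u/N} - N\rho_{\gamma_j + u/N} \leq -\kappa u\big\},
\]
and symmetrically for $\{\lambda_j \leq \gamma_j - u/N\}$. I would then apply \eqref{inequality_counting_function} at $t = \gamma_j \pm u/N$ with exceedance level $\kappa u - C_1$, choosing $c$ large enough (depending on $\eta$ through $\kappa$) so that $\kappa u - C_1 \geq \tfrac{\kappa}{2} u$ for $u \geq c$, and bounding $\sigma_t^2$ via \eqref{sup_variance}. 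Summing the two one-sided contributions (whence the factor $4$) yields exactly \eqref{deviation_bulk} with $C = \kappa/2$.

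The only real difficulty is the bookkeeping of constants: coordinating the choice of $\delta$ (hence $c_\delta$), $\kappa$, $c$ and $c'$ in terms of $\eta$ so that the exceedance level fed into Bernstein remains positive and of order $u$, and so that all points $\gamma_j \pm u/N$ at which $\sigma_t^2$ is controlled stay inside the interior $I_\delta$. No non-trivial probabilistic estimate is needed beyond \eqref{inequality_counting_function} and \eqref{sup_variance}, which have already been established.
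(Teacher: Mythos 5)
Your proof is correct and follows essentially the same route as the paper: translate the eigenvalue deviation event into a counting-function deviation via $\{\lambda_j > t\} = \{\mathcal{N}_t < j\}$, lower-bound $\rho_{\gamma_j + u/N}-\rho_{\gamma_j}$ linearly in $u$, and then apply \eqref{inequality_counting_function} together with Gustavsson's variance estimate \eqref{sup_variance}, adjusting $c$, $c'$ and $\delta$ so that all points stay in $I_\delta$ and the exceedance level remains of order $u$. The one cosmetic difference is that you lower-bound the semicircle density uniformly by $\kappa(\eta)$ on $I_\delta$, whereas the paper integrates $\sqrt{2-x}$ explicitly and invokes \eqref{encadrement_gamma_j} to control $(2-\gamma_j)^{1/2}$ --- a slightly longer computation, but one designed to carry over unchanged to the intermediate regime of Proposition \ref{prop_eigenvalue_deviation_intermediate} (where $\gamma_j\to 2$ and a uniform density lower bound is unavailable), which your shortcut would not.
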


\begin{proof}
Let $\eta \in (0,\frac{1}{2}]$.
To start with, evaluate, for $\frac{N}{2} \leqslant j \leqslant
(1-\eta)N$ and $u \geqslant 0$,
the probability~$\P(|\lambda_j-\gamma_j|>\frac{u}{N})$. We
have
\begin{align*}
\P\Big(\lambda_j >\gamma_j+ \frac{u}{N}\Big) & = \P\Big(\sum_{i=1}^N\mathbbm{1}_{\lambda_i\leqslant
\gamma_j+\frac{u}{N}}<j\Big)\\
 & = \P\big(\mathcal{N}_{\gamma_j+\frac{u}{N}}<j\big)\\
 & = \P\big(N\rho_{\gamma_j+\frac{u}{N}}-\mathcal{N}_{\gamma_j+\frac{u}{N}}>N\rho_{\gamma_j+\frac{u}{N}}-j\big)\\
 & =
\P\big(N\rho_{\gamma_j+\frac{u}{N}}-\mathcal{N}_{\gamma_j+\frac{u}{N}}>N(\rho_{\gamma_j+\frac{u}{N}}-\rho_{
\gamma_j} )\big)
\end{align*}
where it has been used that $\rho_{\gamma_j}=\frac{j}{N}$. Then
\[\P\Big(\lambda_j >\gamma_j+ \frac{u}{N}\Big) \leqslant
\P\big(|\mathcal{N}_{\gamma_j+\frac{u}{N}}-N\rho_{\gamma_j+\frac{u}{N}}|>N(\rho_{\gamma_j+\frac{u}{N}}-\rho_{
\gamma_j } )\big).\]
But
\begin{align*}
\rho_{\gamma_j+\frac{u}{N}}-\rho_{\gamma_j} & =
\int_{\gamma_j}^{\gamma_j+\frac{u}{N}}\frac{1}{2\pi}\sqrt{4-x^2}\,dx\\
  & \geqslant \frac{1}{\sqrt{2}\pi}\int_{\gamma_j}^{\gamma_j+\frac{u}{N}} \sqrt{2-x}\,dx\\
 & \geqslant
\frac{\sqrt{2}}{3\pi}(2-\gamma_j)^{3/2}\bigg(1-\Big(1-\frac{\frac{u}{N}}{2-\gamma_j}\Big)^{3/2}\bigg)\\
 & \geqslant \frac{\sqrt{2}}{3\pi}(2-\gamma_j)^{1/2}\frac{u}{N},
\end{align*}
if $u \leqslant (2-\gamma_j)N$.
By definition, $1-\frac{j}{N}=\int_{\gamma_j}^2\frac{1}{2\pi}\sqrt{4-x^2}\,dx$. 
Then
 \[\frac{1}{\sqrt{2}\pi}\int_{\gamma_j}^2\sqrt{2-x}\,dx \leqslant 1-\frac{j}{N} \leqslant
 \frac{1}{\pi}\int_{\gamma_j}^2\sqrt{2-x}\,dx.\] Computing the preceding integrals yields
\begin{equation}\label{encadrement_gamma_j}
 \Big(\frac{3\pi}{2}\frac{N-j}{N}\Big)^{2/3} \leqslant 2-\gamma_j \leqslant
\Big(\frac{3\pi}{\sqrt{2}}\frac{N-j}{N}\Big)^{2/3}.
\end{equation}
Therefore, $u \leqslant (2-\gamma_j)N$ if $u \leqslant c'N$ with $c' \leqslant
(\frac{3\pi}{2})^{2/3}\eta^{2/3}$. In this case, \eqref{encadrement_gamma_j} yields
\[\rho_{\gamma_j+\frac{u}{N}}-\rho_{\gamma_j} \geqslant 2C\frac{u}{N} \]
where $C>0$. Therefore
\begin{align*}
 \P\Big(\lambda_j >\gamma_j+ \frac{u}{N}\Big) & \leqslant
\P\big(|\mathcal{N}_{\gamma_j+\frac{u}{N}}-N\rho_{\gamma_j+\frac{u}{N}}|>2Cu\big)\\
 & \leqslant \P\big(|\mathcal{N}_{\gamma_j+\frac{u}{N}}-N\rho_{\gamma_j+\frac{u}{N}}|>Cu+C_1\big)
\end{align*}
when $u \geqslant \frac{C_1}{C}=c$. Then,
applying
\eqref{inequality_counting_function} leads to
\[\P\Big(\lambda_j >\gamma_j+ \frac{u}{N}\Big) \leqslant 2\exp\Big(-\frac{C^2u^2
}{2\sigma^2_{\gamma_j+\frac{u}{N}}+Cu}\Big).\]
As $\gamma_j$ is in the bulk, there exist $\delta$ and $c'<(\frac{3\pi}{2})^{2/3}\eta^{2/3}$ such that
$\gamma_j+\frac{u}{N} \in I_{\delta}$, for all~$\eta N \leqslant j \leqslant (1-\eta)N$ and for all~$c
\leqslant u \leqslant c'N$, for all $N \geqslant 1$ (both $\delta$ and $c'$ depend on $\eta$). Then
\[\P\Big(\lambda_j >\gamma_j+ \frac{u}{N}\Big) \leqslant 2\exp\Big(-\frac{C^2u^2
}{2c_{\delta}\log N+Cu}\Big).\]
Repeating the argument leads to the same bound on $\P\big(\lambda_j <\gamma_j- \frac{u}{N}\big)$. Therefore,
\begin{equation*}
\P\Big(|\lambda_j-\gamma_j|\geqslant \frac{u}{N}\Big)\leqslant 4\exp\Big(-\frac{C^2u^2
}{2c_{\delta}\log N+Cu}\Big).
\end{equation*}
The proposition is thus established.

\end{proof}

\begin{proof}[Proof of Theorem \ref{thm_GUE_bulk}]
Note first that, for every $j$,
\begin{equation*}
\esp\big[\lambda_j^{4}\big] \leqslant \sum_{i=1}^N\esp\big[\lambda_i^{4}\big]=\esp\big[\Tr(W_N^4)\big].
\end{equation*}
The mean of this trace can be easily computed
and is equal to $2N+\frac{1}{N}$. Consequently, for all $N \geqslant 1$,
\begin{equation}\label{bound_moment}
\esp\big[\lambda_j^{4}\big] \leqslant 3N.
\end{equation}
Choose next $M=M(\eta)>0$ large enough such that
$\frac{C^2M^2}{2c_{\delta}+CM}>5$. Setting $Z=N|\lambda_j-\gamma_j|$,
\begin{align*}
\esp[Z^2] & = \int_0^{\infty}\P(Z\geqslant v)2v\,dv\\
 & = \int_0^{c}\P(Z\geqslant v)2v\,dv + \int_{c}^{M\log
N}\P(Z\geqslant v)2v\,dv + \int_{M\log N}^{\infty}\P(Z\geqslant v)2v\,dv\\
 & \leqslant c^2 + I_1 + I_2.
\end{align*}
The two latter integrals are handled in different ways. The first one $I_1$
is bounded  using \eqref{deviation_bulk} while $I_2$ is controlled using 
the Cauchy-Schwarz inequality and \eqref{bound_moment}.
Starting thus with $I_2$,
\begin{align*}
I_2 & = \int_{M\log N}^{+\infty}\P(Z\geqslant v)2v\,dv\\
 & \leqslant \esp\big[Z^2\mathbbm{1}_{Z \geqslant M\log N}\big]\\
 & \leqslant \sqrt{\esp\big[Z^{4}\big]} \sqrt{\P\big(Z \geqslant M\log N\big)}\\
 & \leqslant 2A
\exp\bigg(\frac{1}{2}\Big(5-\frac{C^2M^2}{2c_{\delta}+CM}\Big)\log N\bigg),
\end{align*}
where $A>0$ is a numerical constant. As
$\exp\Big(\frac{1}{2}\big(5-\frac{C^2M^2}{2c_{\delta}+CM}\big)\log N\Big)
\underset{N \to
\infty}{\to} 0$, there exists a constant $C(\eta)>0$ such that \[I_2 \leqslant
C(\eta).\]
Turning to $I_1$, recall that Proposition \ref{prop_eigenvalue_deviation_bulk} gives, for $c \leqslant v
\leqslant c'N$, 
\[P(Z\geqslant v) =
\P\Big(|\lambda_j-\gamma_j|\geqslant \frac{v}{N}\Big) \leqslant 4\exp\Big(-\frac{C^2v^2 }{2c_{\delta}\log
N+Cv}\Big).\] 
Hence in the range $v \leqslant M\log N$, 
\[P(Z\geqslant v) \leqslant
4\exp\Big(-\frac{B}{\log N}v^2\Big),\]
where $B=B(\eta)=\frac{C^2}{2c_{\delta}+CM}$.
There exists thus a constant $C(\eta)>0$ such that 
\[I_1 \leqslant C(\eta)\log N.\]
Summarizing the previous steps, $\esp\big[Z^2\big] \leqslant C(\eta)\log N$. Therefore
\[\esp\big[|\lambda_j-\gamma_j|^2\big]\leqslant
C(\eta)\frac{\log N}{N^2}, \]
which is the claim. The proof of Theorem~\ref {thm_GUE_bulk} is complete.
\end{proof}
It may be shown similarly that, under the assumptions of Theorem~\ref{thm_GUE_bulk},
\[\esp\big[|\lambda_j-\gamma_j|^p\big] \leqslant C(p,\eta)\frac{(\log N)^{p/2} }{N^p}.\]

\subsection{Eigenvalues at the edge of the spectrum}\label{edge_GUE}

In \cite{LeRi_2010_deviations}, Ledoux and Rider gave unified proofs of precise small deviation inequalities for the extreme
eigenvalues of $\beta$-ensembles. The results hold in particular for GUE matrices ($\beta=2$) and for GOE
matrices ($\beta=1$). The following theorem summarizes some of the relevant inequalities for the GUE.

\begin{thm}
There exists a universal constant $C>0$ such that the following holds. Let $M_N$ be a GUE matrix. Set $W_N=\frac{1}{\sqrt{N}}M_N$ and denote by $\lambda_N$ the maximal
eigenvalue of $W_N$. Then, for all $N \in \mathbb{N}$ and all $0< \varepsilon\leqslant 1$,
\begin{equation}
 \P\big(\lambda_{N} \geqslant 2(1+\varepsilon)\big) \leqslant C\exp\Big(-\frac{ 2N
}{C}\varepsilon^{3/2}\Big),
\end{equation}
and
\begin{equation}
\P\big(\lambda_{N} \leqslant 2(1-\varepsilon)\big) \leqslant C^2\exp\Big(-\frac{2 N^2
}{C}\varepsilon^3\Big). 
\end{equation}
\end{thm}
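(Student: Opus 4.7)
The statement combines a right-tail (one eigenvalue escaping above the edge) and a left-tail (all eigenvalues pushed strictly inside the bulk) deviation inequality. These are of very different natures, with rates $N\varepsilon^{3/2}$ and $N^2\varepsilon^3$ respectively, and I would treat them by separate arguments.

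For the right tail, the plan is to recycle the counting function approach already used in Section~\ref{bulk_GUE}. Since $\{\lambda_N\geqslant t\}$ is the event $\{\mathcal{N}_{[t,\infty)}\geqslant 1\}$, Markov's inequality gives
\[
\P\big(\lambda_N\geqslant 2(1+\varepsilon)\big)\;\leqslant\;\esp\big[\mathcal{N}_{[2(1+\varepsilon),\infty)}\big]\;=\;\int_{2(1+\varepsilon)}^{\infty}K_N(x,x)\,dx,
\]
where $K_N$ is the Christoffel--Darboux kernel of the (rescaled) Hermite orthogonal polynomials. The bound then reduces to pointwise control of $K_N(x,x)$ to the right of the spectral edge. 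Using the classical Plancherel--Rotach asymptotics of Hermite polynomials in the exponentially decaying regime $x>2$, one gets $K_N(x,x)\leqslant C_1 e^{-c_1 N(x-2)^{3/2}}$ uniformly for $x\geqslant 2$, and integrating over $[2(1+\varepsilon),\infty)$ yields the required $C\exp(-\tfrac{2N}{C}\varepsilon^{3/2})$. The $\varepsilon^{3/2}$ rate is the expected Airy-regime exponent.

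For the left tail, the counting-function trick is unavailable because $\{\lambda_N\leqslant 2(1-\varepsilon)\}$ forces the entire spectrum to sit below the shifted threshold, and one must pay $N$ independent ``entropic'' prices. The natural approach is to use the explicit GUE joint density
\[
p_N(\lambda)\;=\;\frac{1}{Z_N}\prod_{i<j}(\lambda_i-\lambda_j)^2\prod_i e^{-N\lambda_i^2/4},
\]
and write $\P(\lambda_N\leqslant 2(1-\varepsilon))$ as the ratio $Z_N^{(\varepsilon)}/Z_N$ of the partition function integrated against the restricted density over $(-\infty,2(1-\varepsilon)]^N$. By the classical large deviation principle for the empirical spectral measure at speed $N^2$, this ratio is controlled by $\exp(-N^2 E(\varepsilon))$, where $E(\varepsilon)$ is the gap in the logarithmic energy between the equilibrium measure constrained to $[-2,2(1-\varepsilon)]$ and the unconstrained semicircle. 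A direct Taylor expansion of the electrostatic energy around the semicircle minimum shows $E(\varepsilon)\asymp\varepsilon^3$ as $\varepsilon\to 0$, which is what the stated bound encodes.

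The main obstacle is turning the left-tail large deviation estimate into a truly non-asymptotic inequality with a clean universal constant: the LDP as usually stated is asymptotic in $N$. Following Ledoux and Rider, this can be handled via the Dumitriu--Edelman tridiagonal representation of the GUE, which replaces the correlated eigenvalue problem by a tridiagonal matrix with independent Gaussian and $\chi^2$ entries; one can then compute sharp Laplace transform bounds of linear statistics $\Tr f(W_N)$ for well-chosen test functions $f$ and combine them with standard Bernstein-type estimates for sums of independent (possibly chi-squared) variables. The right-tail argument can also be recast in this tridiagonal language and, once the correct exponents $\varepsilon^{3/2}$ and $\varepsilon^3$ are identified from the energy analysis, the universal constant $C$ emerges from elementary calculus.
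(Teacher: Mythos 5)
You should first note that the paper does not actually prove this theorem: it is quoted verbatim from Ledoux and Rider \cite{LeRi_2010_deviations}, so what you are offering is an independent proof sketch, and it has to be judged as such. Your right-tail argument (Markov's inequality on the counting function together with the Hermite/Christoffel--Darboux kernel) is the standard determinantal route and can be carried out, but the pointwise bound you assert, $K_N(x,x)\leqslant C_1e^{-c_1N(x-2)^{3/2}}$ uniformly for $x\geqslant 2$, is false as stated: at distance $O(N^{-2/3})$ from the edge the density $K_N(x,x)$ is of order $N^{2/3}$, so a polynomial prefactor (for instance $CN^{2/3}$) is unavoidable. This is a repairable slip, since after integrating over $[2(1+\varepsilon),\infty)$ the prefactor is absorbed when $N\varepsilon^{3/2}\geqslant 1$ and the bound is trivial (for $C$ large) otherwise, but the step as written is not correct.

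The genuine gap is in the left tail. The equilibrium-measure/LDP computation only identifies the exponent $\varepsilon^3$ asymptotically, as you yourself acknowledge; the whole content of the inequality $\P(\lambda_N\leqslant 2(1-\varepsilon))\leqslant C^2\exp(-2N^2\varepsilon^3/C)$ is its non-asymptotic form with a universal constant, and the route you then propose, namely ``sharp Laplace transform bounds of linear statistics $\Tr f(W_N)$ combined with Bernstein-type estimates,'' does not deliver it: for a bump function $f$ of width $\varepsilon$ at the edge, Lipschitz/Hoffman--Wielandt concentration of $\Tr f(W_N)$ gives an exponent of order $N^2\varepsilon^5$, not $N^2\varepsilon^3$, and improving this requires a genuinely different mechanism. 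In \cite{LeRi_2010_deviations} the left-tail bound is obtained from the tridiagonal model through a variational block argument: one constructs on the order of $N\varepsilon^{3/2}$ test vectors supported on disjoint coordinate blocks of length about $\varepsilon^{-1/2}$ near the corner of the tridiagonal matrix, each of which yields a Rayleigh quotient above $2(1-\varepsilon)$ except with probability $e^{-cN\varepsilon^{3/2}}$, and independence across blocks tensorizes these failure probabilities into $e^{-cN^2\varepsilon^3}$. That key idea (or an equivalent substitute) is absent from your sketch, and the universal constant will not ``emerge from elementary calculus'' without it; as it stands, your left-tail argument is a heuristic for the exponent plus a deferral to the very reference the paper already cites.
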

There exists also a right-tail large deviation inequality for $\varepsilon=O(1)$ of the form
\begin{equation}
\P\big(\lambda_{N} \geqslant 2(1+\varepsilon)\big) \leqslant C\exp\Big(-\frac{ 2N }{C}\varepsilon^2\Big),
\end{equation}
where $C>0$ is a universal constant.
Similar inequalities hold for the smallest eigenvalue $\lambda_{1}$. As stated in \cite{LeRi_2010_deviations}, bounds on the variance straightly follow from these deviation
inequalities.

\begin{cor}\label{thm_GUE_edge}
Let $M_N$ be a GUE matrix. Set $W_N=\frac{1}{\sqrt{N}}M_N$. Then there exists a universal
 constant $C>0$ such that for all $ N \geqslant 1$,
\[ \Var(\lambda_{N}) \leqslant \esp\big[(\lambda_N-2)^2\big] \leqslant  CN^{-4/3}.\]
\end{cor}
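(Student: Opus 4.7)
}

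The first inequality $\Var(\lambda_N)\leqslant \esp[(\lambda_N-2)^2]$ is immediate from the fact that the mean minimizes the quadratic error, so the only real task is to bound $\esp[(\lambda_N-2)^2]$. The plan is to convert this second moment into an integral of tail probabilities and apply the three deviation inequalities stated in the preceding theorem:
\[ \esp\big[(\lambda_N-2)^2\big] = \int_0^{\infty} 2t\, \P\big(|\lambda_N-2|\geqslant t\big)\,dt \leqslant \int_0^\infty 2t\,\P\big(\lambda_N \geqslant 2+t\big)\,dt + \int_0^\infty 2t\,\P\big(\lambda_N \leqslant 2-t\big)\,dt. \]
After the change of variable $t=2\varepsilon$, each piece takes the form $8\int_0^\infty \varepsilon\,\P(\cdots)\,d\varepsilon$, so the three inequalities for $\P(\lambda_N\geqslant 2(1+\varepsilon))$ and $\P(\lambda_N\leqslant 2(1-\varepsilon))$ can be plugged in directly.

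The dominant contribution comes from the small right-tail regime $0<\varepsilon\leqslant 1$, where the bound $C\exp(-\tfrac{2N}{C}\varepsilon^{3/2})$ applies. Substituting $u=\tfrac{2N}{C}\varepsilon^{3/2}$ in
\[ \int_0^1 \varepsilon\, C\exp\!\Big(-\tfrac{2N}{C}\varepsilon^{3/2}\Big)\,d\varepsilon \]
produces a factor $N^{-4/3}$ together with an absolutely convergent integral of $u^{1/3}e^{-u}$, giving an $O(N^{-4/3})$ bound. This is exactly the claimed order and, crucially, the exponent $3/2$ in the Ledoux--Rider inequality matches to reproduce the Tracy--Widom scaling $N^{-2/3}$ for $\lambda_N-2$.

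The remaining three regimes are then verified to contribute at most $O(N^{-4/3})$. For the right tail with $\varepsilon\geqslant 1$, the Gaussian-type bound $C\exp(-\tfrac{2N}{C}\varepsilon^2)$ yields a term of order $e^{-cN}$, which is negligible. For the left tail with $0<\varepsilon\leqslant 1$, the inequality $C^2\exp(-\tfrac{2N^2}{C}\varepsilon^3)$ handled by the substitution $u=\tfrac{2N^2}{C}\varepsilon^3$ gives a bound of order $N^{-4/3}$ (in fact smaller, reflecting the stronger left-tail concentration).

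The only slightly delicate point is the large left deviation regime $\varepsilon>1$, which is not covered by the stated inequalities. I would handle it by a crude moment argument: since $\Tr(W_N)\sim\mathcal{N}(0,1)$ for GUE and $\sum_i\lambda_i\leqslant N\lambda_N$, the event $\{\lambda_N\leqslant 2(1-\varepsilon)\}$ for $\varepsilon\geqslant 1$ forces a Gaussian tail event of probability at most $\exp(-cN^2(\varepsilon-1)^2)$ (for $\varepsilon\geqslant 2$), while for $\varepsilon\in[1,2]$ one bounds the probability by the value at $\varepsilon=1$, which is already $\leqslant C^2 e^{-2N^2/C}$. Both contributions are exponentially small and hence negligible. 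Summing the four pieces gives $\esp[(\lambda_N-2)^2]\leqslant CN^{-4/3}$, and the analogous argument works for $\lambda_1$ using the symmetry of the GUE and the corresponding inequalities for the smallest eigenvalue.
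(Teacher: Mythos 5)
Your proposal is correct and carries out exactly the layer-cake integration that the paper (following Ledoux--Rider \cite{LeRi_2010_deviations}) invokes with the phrase ``bounds on the variance straightly follow from these deviation inequalities,'' so the approach is the same, with you supplying the details the paper omits. Your observation that the regime $\varepsilon>1$ for the left tail is not covered by the stated inequalities is a genuine gap to be filled, and the trace argument you sketch is a valid fix; note the paper already has a ready-made alternative of the same spirit, namely the Cauchy--Schwarz plus $\esp[\lambda_j^4]\leqslant 3N$ device used to bound $I_2$ in the proof of Theorem~\ref{thm_GUE_bulk}, which gives a contribution of order $\sqrt{N}\,e^{-N^2/C}=o(N^{-4/3})$ without introducing the trace.
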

Similar results are probably true for the $k^{\textrm{th}}$ smallest or largest eigenvalue (for $k \in
\mathbb{N}$ fixed).

\subsection{Eigenvalues between the bulk and the edge of the spectrum}\label{intermediate_GUE}

The aim of this section is to establish a bound on the variance for some intermediate eigenvalues. The proof
is very similar to what was done for eigenvalues in the bulk. It relies on the fact that the eigenvalue
counting function has the same distribution as a sum of independent Bernoulli variables due to the
determinantal properties of the eigenvalues. A deviation inequality for individual eigenvalues can then be
derived and the bound on the variance straightly follows. Parts of the proof which are identical to the proof
for eigenvalues in the bulk will be omitted.
In what follows we only consider the right-side of the spectrum. Results and proofs for the left-side can be
deduced by replacing $N-j$ by $j$. The precise statement is the following.

\begin{thm}\label{thm_GUE_intermediate}
Let $M_N$ be a GUE matrix. Set $W_N=\frac{1}{\sqrt{N}}M_N$.
For all $K\geqslant 20\sqrt{2}$ and for all $\eta \in (0,\frac{1}{2}]$, there exists a constant $C(\eta,K)>0$
such that for all $N \geqslant 2$ and all $(1-\eta)N \leqslant j \leqslant N-K\log N $,
\begin{equation}
\esp\big[|\lambda_j-\gamma_j|^2\big] \leqslant C(\eta,K)\frac{\log (N-j)}{N^{4/3}(N-j)^{2/3}}.
\end{equation}
In particular,
\begin{equation}
\Var(\lambda_j) \leqslant C(\eta,K)\frac{\log (N-j)}{N^{4/3}(N-j)^{2/3}}.
\end{equation}
\end{thm}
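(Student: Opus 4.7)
The plan is to mirror the proof of Theorem \ref{thm_GUE_bulk}, rescaling everything to the soft edge. The backbone—determinantal structure, Bernstein inequality \eqref{bernstein}, the Götze–Tikhomirov bound $|\esp[\mathcal{N}_t]-N\rho_t|\leqslant C_1$, and hence \eqref{inequality_counting_function}—carries over verbatim. Only two ingredients need to be adapted: an appropriate control of $\sigma_t^2$ for $t$ in a small neighborhood of $\gamma_j$ in the intermediate regime (which, from Gustavsson's analysis of the Christoffel–Darboux kernel near the edge, is of order $\log(N-j)$ rather than $\log N$), and a sharper lower bound on $\rho_{\gamma_j+s}-\rho_{\gamma_j}$ that respects the vanishing of the semicircle density at $2$.

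For the second point I would use $\rho'_{sc}(x)\geqslant \frac{1}{\sqrt{2}\pi}\sqrt{2-x}$ on $[0,2]$ together with the two-sided estimate \eqref{encadrement_gamma_j}, which gives $2-\gamma_j\asymp ((N-j)/N)^{2/3}$. A direct integration then yields, for $s\leqslant (2-\gamma_j)/2$,
\[
\rho_{\gamma_j+s}-\rho_{\gamma_j}\;\geqslant\; \frac{\sqrt{2-\gamma_j}}{2\sqrt{2}\,\pi}\, s \;\geqslant\; 2C\,\frac{(N-j)^{1/3}}{N^{1/3}}\,s .
\]
The natural rescaling is therefore $s=u/(N^{2/3}(N-j)^{1/3})$, for which $N(\rho_{\gamma_j+s}-\rho_{\gamma_j})\geqslant 2Cu$. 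Exactly as in the proof of Proposition \ref{prop_eigenvalue_deviation_bulk}, one writes $\P(\lambda_j>\gamma_j+s)=\P(\mathcal{N}_{\gamma_j+s}<j)$, absorbs the $C_1$ by requiring $u\geqslant c=C_1/C$, and applies \eqref{inequality_counting_function} with the adapted variance bound to get, for $u$ in a suitable range $[c,\, c' N^{2/3}(N-j)^{1/3}]$,
\[
\P\!\left(|\lambda_j-\gamma_j|\geqslant \frac{u}{N^{2/3}(N-j)^{1/3}}\right)\;\leqslant\; 4\exp\!\left(-\frac{C^2 u^2}{2 c_\delta\log(N-j)+Cu}\right).
\]
The upper cutoff $c'N^{2/3}(N-j)^{1/3}$ ensures both $s\leqslant (2-\gamma_j)/2$ and $\gamma_j+s$ still at macroscopic distance from $2$, so that Gustavsson's $\sigma^2$-bound applies uniformly; the hypothesis $(1-\eta)N\leqslant j\leqslant N-K\log N$ guarantees the right $\eta$- and $K$-dependent constants.

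Setting $Z=N^{2/3}(N-j)^{1/3}|\lambda_j-\gamma_j|$, I would then reproduce the integration scheme from the proof of Theorem \ref{thm_GUE_bulk} on
\[
\esp[Z^2]\;=\;\int_0^{\infty} 2v\,\P(Z\geqslant v)\,dv,
\]
splitting at a small constant $c$ and at $M\log N$. The piece on $[0,c]$ is bounded by $c^2$; the middle piece $I_1$ on $[c,M\log N]$ is controlled by the deviation inequality above (quadratic regime up to $v\asymp \log(N-j)$, then linear regime), contributing $O(\log(N-j))$; the tail $I_2$ on $[M\log N,\infty)$ is handled by Cauchy–Schwarz together with the crude bound $\esp[\lambda_j^4]\leqslant \esp[\Tr(W_N^4)]=2N+1/N$, as in \eqref{bound_moment}. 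Dividing by $N^{4/3}(N-j)^{2/3}$ gives the claim.

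The main obstacle is the tail estimate $I_2$: the Cauchy–Schwarz step produces a factor $\sqrt{\esp[Z^4]}\lesssim N^{11/6}(N-j)^{2/3}$, which must be beaten by $\P(Z\geqslant M\log N)^{1/2}\leqslant 2\exp(-\beta_M\log N/2)$ with $\beta_M=C^2M^2/(2c_\delta+CM)$. Choosing $M$ large enough requires a quantitative lower bound on the constant in Gustavsson's variance estimate, and this is exactly where the numerical constraint $K>20\sqrt{2}$ enters, securing that $M$ can be taken in the admissible range $[c,c'N^{2/3}(N-j)^{1/3}]$ while making $\beta_M$ large enough to absorb the polynomial prefactor uniformly in $j$. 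Apart from this bookkeeping, every other step is a verbatim transcription of the bulk argument with $\log N$ replaced by $\log(N-j)$ and $1/N$ replaced by the intermediate scale $N^{-2/3}(N-j)^{-1/3}$. The symmetric left-edge case $K\log N\leqslant j\leqslant \eta N$ follows by exchanging $j\leftrightarrow N-j$.
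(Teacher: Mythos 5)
Your broad strategy is correct and does mirror the paper's: rescale the bulk proof to the intermediate scale $N^{-2/3}(N-j)^{-1/3}$, use the $\sigma_t^2\lesssim \log(N-j)$ bound from Gustavsson's edge analysis, and integrate the resulting deviation inequality. However there are two substantive problems, one computational and one structural, and the second is where the constraint $K>20\sqrt{2}$ actually comes from (your explanation of it is incorrect).

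\textbf{The upper cutoff is wrong.} You claim the deviation inequality holds on $[c,\,c'N^{2/3}(N-j)^{1/3}]$. With $s=u/(N^{2/3}(N-j)^{1/3})$, your constraint $s\leqslant (2-\gamma_j)/2$ together with $2-\gamma_j \asymp ((N-j)/N)^{2/3}$ gives
\[
u \;\leqslant\; \tfrac{1}{2}N^{2/3}(N-j)^{1/3}\,(2-\gamma_j)\;\asymp\; N^{2/3}(N-j)^{1/3}\cdot \Big(\tfrac{N-j}{N}\Big)^{2/3} \;=\; N-j,
\]
so the correct cutoff is $c'(N-j)$, which is \emph{strictly smaller} than $c'N^{2/3}(N-j)^{1/3}$ whenever $N-j<N$. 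This is a direct consequence of \eqref{encadrement_gamma_j}; you apparently dropped the factor $2-\gamma_j$.

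\textbf{The tail estimate does not close as stated.} You split the integral at $M\log N$ for a free constant $M$ to be chosen large, and want $\sqrt{\esp[Z^4]}\,\P(Z\geqslant M\log N)^{1/2}\to 0$ by taking $M$ large. But the deviation inequality at $v=M\log N$ is only available if $M\log N\leqslant c'(N-j)$, which, since $N-j$ can be as small as $K\log N$, forces $M\leqslant c'K$. On the other hand, beating the prefactor $N^{11/6}(N-j)^{2/3}\leqslant N^{5/2}$ requires $\frac{C^2M^2}{C'+CM}>5$, where $C'$ is the constant in Gustavsson's variance bound \eqref{sup_variance_intermediate}. That constant is not quantitatively controlled, so one cannot guarantee an admissible $M\leqslant c'K$ exists. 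Your remark that the constraint $K>20\sqrt{2}$ ``secures a quantitative lower bound on Gustavsson's constant'' has it backwards and, more importantly, does not resolve this.

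The paper's proof avoids the issue by splitting at $v=c'(N-j)$ (not at $M\log N$): on $[\frac{C'}{C}\log(N-j),\,c'(N-j)]$ the exponent is in its linear regime and contributes only a constant, while the Cauchy--Schwarz tail starting at $c'(N-j)$ uses $\P(Z\geqslant c'(N-j))^{1/2}\leqslant 2\exp(-\tfrac{Cc'}{4}(N-j))\leqslant 2\exp(-\tfrac{KCc'}{4}\log N)$, whose exponent $\tfrac{KCc'}{4}=\tfrac{K\alpha}{8\sqrt{2}}$ depends only on the explicit constants $C=2^{-5/6}(3\pi)^{-2/3}$ and $c'=\alpha(3\pi/2)^{2/3}$, \emph{not} on Gustavsson's $C'$. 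Requiring $\tfrac{K\alpha}{8\sqrt{2}}>\tfrac{5}{2}$ with $\alpha<1$ is precisely why $K>20\sqrt{2}$. You should adopt this three-way split $[c,\frac{C'}{C}\log(N-j)]\cup[\frac{C'}{C}\log(N-j),c'(N-j)]\cup[c'(N-j),\infty)$ to repair the argument.
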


The preceding theorem does not concern all intermediate eigenvalues
since $N-j$ has to be at least of the order of $20\sqrt{2}\log N$ for
the method used here to yield the correct order on the variance. This
restriction seems however to be technical
since Gustavsson \cite{Gu_2005_fluctuations} proved
that a Central Limit Theorem holds for all eigenvalues $\lambda_j$ such that $N-j \to \infty$ but $\frac{N-j}{N} \to 0$. From this CLT, the variance of such an individual eigenvalue $\lambda_j$ is guessed to be 
similarly of the order of $\frac{\log(N-j)}{N^{4/3}(N-j)^{2/3}}$ for this range.

As for eigenvalues in the bulk, the proof relies on the deviation inequality for the eigenvalue counting
function \eqref{inequality_counting_function}. From this and a bound on the variance of this counting
function \eqref{sup_variance}, it was then possible to derive a deviation inequality for eigenvalues in the
bulk. The work of Gustavsson \cite{Gu_2005_fluctuations}
suggests that for all $0<\tilde{\eta}<4$ and for
all $\tilde{K}>0$, there exists a constant $c_{\tilde{\eta},\tilde{K}}>0$ such that the following holds. For every sequence $(t_N)_{N \in \mathbb{N}}$ such that, for all $N$, $0<2-t_N \leqslant \tilde{\eta}$ and $N(2-t_N)^{3/2} \geqslant \tilde{K}\log N$,
\begin{equation}\label{sup_variance_intermediate}
\sigma_{t_N}^2 \leqslant c_{\tilde{\eta},\tilde{K}}\log\big(N(2-t_N)^{3/2}\big).
\end{equation}
Similarly to the bulk case, the following proposition can be established.

\begin{prop}[Deviation inequality for intermediate
$\lambda_j$]\label{prop_eigenvalue_deviation_intermediate}
There exist universal positive constants $C$ and $c$ such that the following holds.
Let $K > 20\sqrt{2}$ and $\eta \in (0,\frac{1}{2}]$. Set $(1-\eta)N \leqslant j \leqslant N-K\log N
$.
There exists $C'>0$ and $c'>0$ depending on $K$ and $\eta$ such
that for all
$c \leqslant u \leqslant c'(N-j)$, 
\begin{equation}\label{deviation_intermediate}
\P\Big(|\lambda_j-\gamma_j|\geqslant \frac{u}{N^{2/3}(N-j)^{1/3}}\Big)\leqslant 4\exp\Big(-\frac{C^2u^2
}{C'\log (N-j)+Cu}\Big).
\end{equation}
\end{prop}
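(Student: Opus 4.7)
The proof proceeds in complete analogy with that of Proposition~\ref{prop_eigenvalue_deviation_bulk}: the eigenvalue deviation is rephrased as a deviation of the counting function $\mathcal{N}_{\gamma_j+s}$ from its mean, which is then controlled by combining \eqref{inequality_counting_function} with the intermediate-scale variance estimate \eqref{sup_variance_intermediate} in place of the bulk estimate \eqref{sup_variance}. The two technical adjustments required compared with the bulk are (i) a more careful lower bound on $\rho_{\gamma_j+s}-\rho_{\gamma_j}$, since the semicircle density vanishes like $\sqrt{2-x}$ as $\gamma_j\to 2$, and (ii) verification that the hypotheses of \eqref{sup_variance_intermediate} are met at the shifted point $t_N=\gamma_j+s$.

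Setting $s = u/(N^{2/3}(N-j)^{1/3})$, I would start from the identity
\[\P(\lambda_j>\gamma_j+s)=\P\bigl(N\rho_{\gamma_j+s}-\mathcal{N}_{\gamma_j+s}>N(\rho_{\gamma_j+s}-\rho_{\gamma_j})\bigr),\]
and use $\sqrt{4-x^2}\geq \sqrt{2}\sqrt{2-x}$ on $x\geq 0$ together with the elementary concavity bound $1-(1-t)^{3/2}\geq t$ on $[0,1]$ to obtain $\rho_{\gamma_j+s}-\rho_{\gamma_j}\geq \tfrac{\sqrt{2}}{3\pi}(2-\gamma_j)^{1/2}\,s$, valid as long as $s\leq 2-\gamma_j$. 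Inserting the lower estimate $2-\gamma_j\geq (\tfrac{3\pi}{2}(N-j)/N)^{2/3}$ from \eqref{encadrement_gamma_j} yields $N(\rho_{\gamma_j+s}-\rho_{\gamma_j})\geq 2Cu$ for some absolute $C>0$, provided $u\leq c''(N-j)$ for an appropriate $c''=c''(\eta)$. Absorbing the additive $C_1$ in \eqref{inequality_counting_function} by requiring $u\geq c:=C_1/C$ then produces
\[\P(\lambda_j>\gamma_j+s)\leq 2\exp\!\Bigl(-\frac{C^2u^2}{2\sigma_{\gamma_j+s}^2+Cu}\Bigr).\]

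The remaining step is to bound $\sigma_{\gamma_j+s}^2$ through \eqref{sup_variance_intermediate}. The constraint $2-t_N\leq \tilde\eta<4$ is automatic in our range for a suitable $\tilde\eta=\tilde\eta(\eta)$. For the logarithmic scale condition $N(2-t_N)^{3/2}\geq \tilde K\log N$, I would shrink $c''$ so that $s\leq (2-\gamma_j)/2$; then $2-t_N\geq (2-\gamma_j)/2$, and \eqref{encadrement_gamma_j} gives
\[N(2-t_N)^{3/2}\geq \tfrac{3\pi}{4\sqrt{2}}(N-j)\geq \tfrac{3\pi K}{4\sqrt{2}}\log N,\]
which is precisely where the hypothesis $K>20\sqrt{2}$ enters, calibrated so that $\tilde K$ exceeds the threshold required by \eqref{sup_variance_intermediate}. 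The matching upper bound $N(2-t_N)^{3/2}\leq \tfrac{3\pi}{\sqrt{2}}(N-j)$ converts Gustavsson's estimate into $\sigma_{\gamma_j+s}^2\leq C'(\eta,K)\log(N-j)$, and the symmetric treatment of $\P(\lambda_j<\gamma_j-s)$ finally produces \eqref{deviation_intermediate}. The main difficulty is essentially bookkeeping: one has to coordinate the three coupled scales $s$, $2-\gamma_j$, and $(N-j)/N$ so that density linearization, the constraint $s\leq (2-\gamma_j)/2$, and both hypotheses of \eqref{sup_variance_intermediate} hold simultaneously on the stated range $c\leq u\leq c'(N-j)$.
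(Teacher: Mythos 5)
Your proof proposal is correct and follows essentially the same route as the paper's: rewrite the eigenvalue deviation through the counting function, lower-bound $\rho_{\gamma_j+s}-\rho_{\gamma_j}$ via $\sqrt{4-x^2}\geqslant\sqrt{2}\sqrt{2-x}$ and the concavity inequality, invoke the Bernstein-type bound \eqref{inequality_counting_function}, then control $\sigma_{\gamma_j+s}^2$ through Gustavsson's intermediate-scale estimate \eqref{sup_variance_intermediate} after checking its two hypotheses at the shifted point $t_N=\gamma_j+s$.

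One small inaccuracy worth flagging: you attribute the hypothesis $K>20\sqrt{2}$ to a threshold on $\tilde K$ demanded by \eqref{sup_variance_intermediate}. But \eqref{sup_variance_intermediate} is stated for every $\tilde K>0$, so no lower threshold on $\tilde K$ is actually imposed there. Within the proof of the proposition, $K>20\sqrt{2}$ serves a more mundane purpose: it guarantees $N-j\geqslant K\log N\geqslant K\log 2>\tfrac{3\pi}{\sqrt{2}}$, which lets one absorb the additive constant $\log(\tfrac{3\pi}{\sqrt{2}})$ from the upper bound $N(2-t_N)^{3/2}\leqslant\tfrac{3\pi}{\sqrt{2}}(N-j)$ into $\log(N-j)$, yielding $\sigma_{t_N}^2\leqslant C'\log(N-j)$. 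The real bite of $K>20\sqrt{2}$ comes later, in the integration step proving Theorem~\ref{thm_GUE_intermediate}, where $\tfrac{KCc'}{4}=\tfrac{K\alpha}{8\sqrt{2}}>\tfrac{5}{2}$ is needed to kill the tail integral $J_3$. Also note that your fixed choice $s\leqslant(2-\gamma_j)/2$ (i.e., $\alpha=\tfrac12$) works for the proposition itself, but the paper keeps $\alpha$ as a free parameter in $(\tfrac{20\sqrt{2}}{K},1)$ precisely because the theorem's tail bound requires $\alpha$ to be chosen close to $1$ when $K$ is close to $20\sqrt{2}$; with $\alpha=\tfrac12$ fixed you would only recover the theorem for $K>40\sqrt{2}$.
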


\begin{proof}
Set $C=2^{-5/6}(3\pi)^{-2/3}$. 
Let $K>20\sqrt{2}$ and $\eta \in (0,\frac{1}{2}]$. Take $\alpha \in \big(\frac{20\sqrt{2}}{K},1\big)$ and set
$c'=\alpha\big(\frac{3\pi}{2}\big)^{2/3}$. For $(1-\eta)N \leqslant j \leqslant N-K\log N
$ and $u \geqslant 0$, set $u_{N,j}=\frac{u}{N^{2/3}(N-j)^{1/3}}$.
As in the proof of Proposition \ref{prop_eigenvalue_deviation_bulk}, evaluating the
probability~$\P(|\lambda_j-\gamma_j|>u_{N,j})$ yields
\[\P\Big(\lambda_j >\gamma_j+ u_{N,j}\Big) \leqslant
\P\big(|\mathcal{N}_{\gamma_j+u_{N,j}}-N\rho_{\gamma_j+u_{N,j}}
|>N(\rho_{ \gamma_j+u_{N,j}}-\rho_{
\gamma_j } )\big).\]
But, if $u \leqslant N^{2/3}(N-j)^{1/3}(2-\gamma_j)$,
\begin{align*}
\rho_{\gamma_j+u_{N,j}}-\rho_{\gamma_j} 
 & \geqslant \frac{\sqrt{2}}{3\pi}(2-\gamma_j)^{1/2}u_{N,j}.
\end{align*}
Similarly to the proof of Proposition \ref{prop_eigenvalue_deviation_bulk}, this condition holds if $u
\leqslant c'(N-j)$. In this case,
\[\P\Big(\lambda_j >\gamma_j+ u_{N,j}\Big) \leqslant
\P\big(|\mathcal{N}_{\gamma_j+u_{N,j}}-N\rho_{\gamma_j+u_{N,j}} |>Cu+C_1\big),\]
when $u \geqslant c=\frac{C_1}{C}$. Then, applying
\eqref{inequality_counting_function} leads to
\[ \P\Big(\lambda_j >\gamma_j+ u_{N,j}\Big) \leqslant
2\exp\Big(-\frac{C^2u^2
}{2\sigma^2_{\gamma_j+u_{N,j}}+Cu}\Big)\]
for all $c \leqslant u \leqslant c'(N-j)$.
Gustavsson's result \eqref{sup_variance_intermediate} gives a bound on $\sigma_{\gamma_j+u_{N,j}}^2$. Set
$t_N=\gamma_j+u_{N,j}$. As 
$j \geqslant (1-\eta)N$ and $u \geqslant 0$, $0 \leqslant 2-t_N \leqslant 2-\gamma_j \leqslant
\big(\frac{3\pi}{2}\eta\big)^{2/3}=\tilde{\eta}$ for
all $N$. Moreover,
\begin{align*}
N(2-t_N)^{3/2} & = N(2-\gamma_j-u_{N,j})^{3/2}\\
 & = N(2-\gamma_j)^{3/2}\Big(1-\frac{u_{N,j}}{2-\gamma_j}\Big)^{3/2}\\
  & \geqslant \frac{3\pi}{2}(N-j)\Big(1-\frac{c'(N-j)^{2/3}}{N^{2/3}(2-\gamma_j)}\Big)^{3/2}\\
 & \geqslant \frac{3\pi}{2}(1-\alpha)^{3/2}K\log N\\
 & \geqslant \tilde{K}\log N,
\end{align*}
where $\tilde{K}=\frac{3\pi}{2}(1-\alpha)^{3/2}K>0$.
From \eqref{sup_variance_intermediate}, for all $c \leqslant u \leqslant c'(N-j)$,
\[\Var(\mathcal{N}_{\gamma_j+ u_{N,j}}) 
    \leqslant c_{\tilde{\eta},\tilde{K}}\log\big(N(2-t_N)^{3/2}\big).\] But
\begin{align*}
N(2-t_N)^{3/2} & = N(2-\gamma_j-u_{N,j})^{3/2}\\
 & \leqslant N(2-\gamma_j)^{3/2}\\
 & \leqslant \frac{3\pi}{\sqrt{2}}(N-j).
\end{align*}
Hence $\log\big(N(2-t_N)^{3/2}\big) 
\leqslant \log(N-j)+\log(\frac{3\pi}{\sqrt{2}})$. As $K>20\sqrt{2}$ and $N
\geqslant 2$, $N-j \geqslant K\log N \geqslant \frac{3\pi}{\sqrt{2}}$ and $\Var(\mathcal{N}_{\gamma_j+
u_{N,j}}) \leqslant 2c_{\tilde{\eta},\tilde{K}}\log(N-j)$.
Therefore
\[\P\Big(\lambda_j >\gamma_j+ u_{N,j}\Big) \leqslant 2\exp\Big(-\frac{C^2u^2
}{4c_{\tilde{\eta},\tilde{K}}\log(N-j)+Cu}\Big).\]
The proof is concluded similarly to Proposition \ref{prop_eigenvalue_deviation_bulk}.
\end{proof}

On the basis of Proposition \ref {prop_eigenvalue_deviation_intermediate},
we may then conclude the proof of Theorem \ref{thm_GUE_intermediate}.

\begin{proof}[Proof of Theorem \ref{thm_GUE_intermediate}]
Setting $Z=N^{2/3}(N-j)^{1/3}|\lambda_j-\gamma_j|$,
\begin{align*}
\esp[Z^2] & = \int_0^{\infty}\P(Z\geqslant v)2v\,dv\\
 & = \int_0^{c}\P(Z\geqslant v)2v\,dv + \int_{c}^{\frac{C'}{C}\log(N-j)}\P(Z\geqslant v)2v\,dv\\
 & +
\int_{\frac{C'}{C}\log(N-j)}^{c'(N-j)}\P(Z\geqslant v)2v\,dv + \int_{c'(N-j)}^{\infty}\P(Z\geqslant
v)2v\,dv\\
 & \leqslant c^2 + J_1 + J_2+J_3.
\end{align*}
Repeating the computations carried out with $I_2$ in the proof of Theorem \ref{thm_GUE_bulk} yields
\begin{align*}
J_3  & \leqslant
2AN^{11/6}(N-j)^{2/3}\exp\Big(-\frac{1}{2}\frac{C^2c'^2(N-j)^2}{C'\log(N-j)+Cc'(N-j)}\Big),
\end{align*}
where $A>0$ is a numerical constant. For $N$ large enough (depending on $\eta$ and $K$), $C'\log(N-j)
\leqslant Cc'(N-j)$ and
\[ J_3 \leqslant
2AN^{5/2}\exp\Big(-\frac{Cc'}{4}(N-j)\Big).\]
Then, as $N-j \geqslant K\log N$,
\[J_3 \leqslant 2AN^{5/2}\exp\Big(-\frac{KCc'}{4}\log N\Big).\] But
$\frac{KCc'}{4}=\frac{K\alpha}{8\sqrt{2}} > \frac{5}{2}$. The right-hand side goes thus to $0$ when $N$
goes
to infinity. As a consequence, there exists a constant $C(\eta,K)>0$ such that \[J_3
\leqslant
C(\eta,K).\]
The integral $J_1$ is handled as $I_1$, using that, in the range $v \leqslant \frac{C'}{C}\log(N-j)$, 
 \[P(Z\geqslant v) \leqslant
 4\exp\Big(-\frac{B}{\log (N-j)}v^2\Big),\]
 where $B=B(\eta,K)=\frac{C^2}{2C'}$ (this is due to Proposition
 \ref{prop_eigenvalue_deviation_intermediate}).
Hence, there exists a constant $C(\eta,K)$ such that
\[J_1 \leqslant C(\eta,K)\log(N-j).\]
Finally, $J_2$ is handled similarly. From Proposition \ref{prop_eigenvalue_deviation_intermediate}, in the
range $\frac{C'}{C}\log(N-j) \leqslant v \leqslant c'(N-j)$,
\[P(Z\geqslant v) \leqslant 4\exp\Big(-\frac{C}{2}v\Big).\]
Thus
\[J_2 \leqslant 4\int_{\frac{C'}{C}\log(N-j)}^{c'(N-j)}\exp\Big(-\frac{C}{2}v\Big)2v\,dv \leqslant
4\int_{0}^{\infty}\exp\Big(-\frac{C}{2}v\Big)2v\,dv.\]
Then $J_2$ is bounded by a constant, which is independent of $\eta$ and $K$. 
There exists thus a constant $C>0$ such that 
\[J_2 \leqslant C.\]
Summarizing the previous steps, $\esp\big[Z^2\big] \leqslant C(\eta, K)\log (N-j)$. Therefore
\[\esp\big[|\lambda_j-\gamma_j|^2\big]\leqslant
C(\eta,K)\frac{\log (N-j)}{N^{2/3}(N-j)^{1/3}}, \]
which is the claim.
\end{proof}

\section{Variance bounds for Wigner Hermitian matrices}\label{Wigner}
As announced, the goal of this section is to prove Theorems \ref{thm_bulk}, \ref{thm_edge} and \ref{thm_intermediate} for Wigner Hermitian matrices. The eigenvalues of a Wigner Hermitian matrix do not form a determinantal process.
Therefore it does not seem easy to provide deviation inequalities for the counting function and for individual
eigenvalues. However the sharp non-asymptotic bounds established in the Gaussian case can still be reached by
a comparison procedure.

\subsection{Localization of the eigenvalues and the Four Moment Theorem}\label{tools_Wigner}
Two main recent theorems will be used in order to carry out this comparison procedure. First, Erdös, Yau and
Yin proved in \cite{ErYaYi_2010_rigidity} a Localization Theorem which gives a high probability non-asymptotic
bound on the distance between an eigenvalue $\lambda_j$ and its theoretical value $\gamma_j$. Secondly, Tao
and Vu's Four Moment Theorem (see \cite{TaVu_2011_wigner} and \cite{TaVu_2010_wigner_edge}) provides a very useful
non-asymptotic bound on the error made by approximating a statistics of the eigenvalues of a Wigner matrix by the
same statistics but with the eigenvalues of a GUE matrix.

Let $M_N$ be a Wigner Hermitian matrix. Say that $M_N$ satisfies condition $(C0)$ if the real part $\xi$ and
the imaginary part $\tilde{\xi}$ of $(M_N)_{ij}$ are independent and have an exponential decay: there are two positive constants $B_1$ and $B_2$ such that
\begin{equation*}
\P\big(|\xi|\geqslant t^{B_1}\big) \leqslant e^{-t} \quad \textrm{and} \quad \P\big(|\tilde{\xi}|\geqslant t^{B_1}\big)
\leqslant e^{-t}
\end{equation*}
for all $t \geqslant B_2$.

\begin{thm}[Localization \cite{ErYaYi_2010_rigidity}]\label{thm_localization}
Let $M_N$ be a random Hermitian matrix whose entries satisfy condition $(C0)$.
There are positive universal constants $c$ and $C$ such that, for any $ 1 \leqslant j \leqslant N $,
\begin{equation}\label{localization_inequality}
\P\Big(|\lambda_j-\gamma_j|\geqslant (\log N)^{C\log\log N}N^{-2/3}\min(j,N+1-j)^{-1/3}\Big)\leqslant
Ce^{-(\log
N)^{c\log\log N}}.
\end{equation}
\end{thm}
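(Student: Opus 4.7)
The plan is to follow the strategy of Erdős, Yau and Yin: reduce the question of the location of individual eigenvalues $\lambda_j$ to a sharp estimate on the eigenvalue counting function $\mathcal{N}_t$, and then control that counting function through a local version of Wigner's semicircle law for the empirical Stieltjes transform, valid down to nearly optimal scales. The ingredients are standard in the local-law literature but must be implemented carefully enough to yield the claimed sub-$(\log N)^{C\log\log N}$ precision.

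First I would introduce $m_N(z) = \frac{1}{N}\sum_{i=1}^N (\lambda_i-z)^{-1}$ and the Stieltjes transform $m_{sc}(z)=\int (x-z)^{-1}d\rho_{sc}(x)$, and work on the spectral domain $z = E+i\eta$ with $\eta$ small but somewhat larger than $1/N$. Since $m_N(z)=\frac{1}{N}\sum_i (W_N - z)^{-1}_{ii}$, the Schur complement formula expresses each diagonal resolvent entry in terms of the minor $W_N^{(i)}$ and a quadratic form in the $i$-th column. Condition $(C0)$, together with standard large-deviation bounds for such quadratic forms, shows that this expression concentrates around its conditional mean up to a random error of size roughly $(N\eta)^{-1/2}$ times polylogarithmic factors. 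Summing over $i$ produces a random perturbation of the self-consistent equation
\[ m_{sc}(z) + \frac{1}{m_{sc}(z)+z} = 0. \]

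Next I would carry out the stability analysis of the perturbed equation. Inverting near the semicircle branch $m_{sc}(z)$ and tracking how the stability factor degenerates as $E$ approaches $\pm 2$ yields, on a suitable spectral domain,
\[ |m_N(z) - m_{sc}(z)| \leqslant \frac{(\log N)^{C\log\log N}}{N\eta} \]
with probability at least $1 - Ce^{-(\log N)^{c\log\log N}}$. The execution is a bootstrap argument: start from the trivial bound at $\eta$ of order one and propagate the estimate down to $\eta$ slightly larger than $1/N$, checking at each scale that the concentration error stays smaller than the current stability margin, with a finer formulation near the edge in terms of the distance $\kappa = ||E|-2|$.

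Finally I would transfer this control to $\mathcal{N}_t$. Writing $\mathcal{N}_t - N\rho_t$ as an integral of $\mathrm{Im}\,[m_N - m_{sc}]$ smoothed at scale $\eta$ via a Helffer-Sjöstrand or Cauchy formula yields $\sup_t |\mathcal{N}_t - N\rho_t| \leqslant (\log N)^{C\log\log N}$ on the same event. Setting $t = \gamma_j \pm \delta_j$ with $\delta_j = (\log N)^{C\log\log N} N^{-2/3}\min(j,N+1-j)^{-1/3}$, the square-root behavior of $\rho_{sc}$ near $\pm 2$ (captured by the two-sided bound \eqref{encadrement_gamma_j}) gives $N|\rho_{\gamma_j\pm \delta_j}-\rho_{\gamma_j}| \gg (\log N)^{C\log\log N}$, so on the high-probability event the inequalities $\mathcal{N}_{\gamma_j - \delta_j} < j \leqslant \mathcal{N}_{\gamma_j + \delta_j}$ force $|\lambda_j - \gamma_j| \leqslant \delta_j$, which is the claim.

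The main obstacle is the stability/bootstrap step that pushes the local law down to scale $\eta \approx 1/N$ with uniform precision near the spectral edge, where the self-consistent equation is nearly degenerate and the concentration errors are largest. Matching the quality of the large-deviation estimates for the quadratic forms with the contracting factor of the fixed-point iteration is precisely what forces the slightly superlogarithmic factor $(\log N)^{C\log\log N}$ appearing both in the precision and in the tail bound; the conversion from $m_N$ to $\mathcal{N}_t$ to $\lambda_j$ afterwards is comparatively mechanical.
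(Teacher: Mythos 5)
The paper does not actually prove this statement: Theorem \ref{thm_localization} is imported verbatim from Erd\H{o}s, Yau and Yin \cite{ErYaYi_2010_rigidity} and used as a black box, so there is no internal proof to compare with. Measured against the actual EYY argument, your sketch reproduces the right high-level strategy (local semicircle law for $m_N$ via Schur complements and a self-consistent equation, bootstrap in $\eta$ down to scale slightly above $1/N$, Helffer--Sj\"ostrand conversion to the counting function, then inversion to individual eigenvalues), but as written it is a programme rather than a proof, and the gaps are exactly at the points you defer. First, the precision you claim, $|m_N(z)-m_{sc}(z)|\leqslant (\log N)^{C\log\log N}(N\eta)^{-1}$, does not follow from the entrywise concentration of order $(N\eta)^{-1/2}$ that the quadratic-form large deviation bounds give; the upgrade from $(N\eta)^{-1/2}$ to $(N\eta)^{-1}$ for the \emph{averaged} quantity is the fluctuation-averaging cancellation among the self-consistent errors, which is the technical core of \cite{ErYaYi_2010_rigidity} and is nowhere indicated in your outline. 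Without it, integrating $\mathrm{Im}\,m_N-\mathrm{Im}\,m_{sc}$ only yields a counting-function error of order $\sqrt{N}$ up to logs, far too weak for \eqref{localization_inequality}.

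Second, the final inversion step is incomplete near the extreme indices. When $\min(j,N+1-j)=O(1)$, the window $\gamma_j\pm\delta_j$ sticks out of $[-2,2]$, where $\rho$ saturates at $0$ or $1$, so the inequality $N|\rho_{\gamma_j\pm\delta_j}-\rho_{\gamma_j}|\gg(\log N)^{C\log\log N}$ fails on the outward side and the sandwich $\mathcal{N}_{\gamma_j-\delta_j}<j\leqslant\mathcal{N}_{\gamma_j+\delta_j}$ no longer forces $|\lambda_j-\gamma_j|\leqslant\delta_j$. EYY handle this by a separate high-probability statement that no eigenvalues lie (much) outside the limiting support, obtained from the local law \emph{outside} the spectrum where the imaginary part is small and the estimates improve; this ingredient must be added. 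Finally, a minor but necessary bookkeeping point: the exponent constant in $\delta_j$ has to be chosen strictly larger than the one appearing in the counting-function rigidity, otherwise the displayed ``$\gg$'' is only ``$\asymp$'' and the conclusion does not follow.
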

This a strong localization result and it almost yields the correct order on the bound on the variance. Indeed,
by means of the Cauchy-Schwarz inequality and \eqref{bound_moment}, it can be shown that, for $\eta N
\leqslant j
\leqslant (1-\eta)N$,
\[\Var(\lambda_j) \leqslant \esp\big[(\lambda_j-\gamma_j)^2\big] \leqslant \frac{C(\eta)}{N^2}(\log
N)^{C\log\log
N}.\]
In Tao and Vu's recent paper \cite{TaVu_2012_deviation} on deviation inequalities, the authors proved a more
precise localization result. They indeed established a bound similar to \eqref{localization_inequality} but
with $(\log N)^{A}$ instead of $(\log N)^{C\log\log N}$ (where $A>0$ is fixed). However this more precise
bound is of no help below, as the final bounds on the variances remain unchanged.

We turn now to Tao and Vu's Four Moment Theorem, in order to compare $W_N$ with a GUE matrix $W_N'$. Say that two complex random variables $\xi$ and $\xi'$ match to order $k$ if
\begin{equation*}
\esp\big[ \Re(\xi)^m \Im(\xi)^l\big]
    =\esp\big[\Re(\xi')^m \Im(\xi')^l\big]
\end{equation*}
for all $m, l \geqslant 0$ such that $m+l \leqslant k$.

\begin{thm}[Four Moment Theorem \cite{TaVu_2011_wigner}, \cite{TaVu_2010_wigner_edge}]\label{TV}

There exists a small positive constant $c_0$ such that the following holds.
Let $M_N=(\xi_{ij})_{1\leqslant i,j \leqslant N}$ and $M_N'=(\xi'_{ij})_{1\leqslant i,j \leqslant N}$ be
two random Wigner Hermitian matrices satisfying condition $(C0)$. Assume that, for $1\leqslant i<j\leqslant N$, $\xi_{ij}$ and $\xi'_{ij}$
match to order $4$ and that, for $1 \leqslant i \leqslant N$, $\xi_{ii}$ and $\xi'_{ii}$ match to order $2$.
Set $A_N=\sqrt{N}M_N$ and $A_N'=\sqrt{N}M_N'$.
Let $G: \mathbb{R} \to \mathbb{R}$ be a smooth fonction such that:
\begin{equation}\label{condition_derivees}
\forall \ 0 \leqslant k \leqslant 5, \quad \forall x \in \mathbb{R}, \quad \big|G^{(k)}(x)\big|
\leqslant
N^{c_0}.
\end{equation}
Then, for all $1 \leqslant i \leqslant N$ and for $N$ large
enough (depending on constants $B_1$ and $B_2$ in condition $(C0)$),
\begin{equation}
\big|\esp\big[G(\lambda_{i}(A_N))\big]-\esp\big[G(\lambda_{i}(A_N'))\big]\big|\leqslant N^{-c_0}.
\end{equation}
\end{thm}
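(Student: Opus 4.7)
The strategy I would pursue is the Lindeberg-type replacement (swapping) method, which is the core of the Tao--Vu approach. Order the independent entries on and above the diagonal as $\xi_1,\ldots,\xi_m$ with $m=O(N^2)$, and form a sequence of interpolating matrices $A^{(0)}=A_N, A^{(1)},\ldots,A^{(m)}=A_N'$, where $A^{(k)}$ agrees with $A_N'$ in its first $k$ entries (and their Hermitian partners) and with $A_N$ in the remaining ones. A telescoping gives
\begin{equation*}
\esp\bigl[G(\lambda_i(A_N))\bigr]-\esp\bigl[G(\lambda_i(A_N'))\bigr]
   = \sum_{k=1}^{m} \Bigl(\esp\bigl[G(\lambda_i(A^{(k-1)}))\bigr]-\esp\bigl[G(\lambda_i(A^{(k)}))\bigr]\Bigr),
\end{equation*}
so it suffices to bound each individual swap by $N^{-2-c_0-\varepsilon}$ or so and sum.

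For a single swap, write $A^{(k-1)} = A^{(0,k)}+\xi\,E_{ab}$ and $A^{(k)} = A^{(0,k)}+\xi'\,E_{ab}$, where $A^{(0,k)}$ is the matrix with the $(a,b)$ entry zeroed out and $E_{ab}$ is the elementary matrix (symmetrized). Taylor expand the function $t\mapsto G(\lambda_i(A^{(0,k)}+t E_{ab}))$ around $t=0$ to order five:
\begin{equation*}
G(\lambda_i(A^{(0,k)}+tE_{ab}))
   = \sum_{\ell=0}^{4}\frac{t^\ell}{\ell!}\,F_\ell + R_5(t),
\end{equation*}
with $F_\ell$ independent of $\xi,\xi'$ and $R_5$ controlled by the fifth derivative. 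Taking expectations, substituting $t=\xi$ or $t=\xi'$, and using the hypothesis that $\xi_{ab}$ and $\xi'_{ab}$ match to order four (and the diagonal ones to order two, which corresponds to the reduced number of derivative terms needed for diagonal entries), the $\ell \leqslant 4$ contributions cancel exactly. Only the fifth-order remainder survives, and its expectation is controlled using $\esp[|\xi|^5]+\esp[|\xi'|^5]\leqslant C$, which follows from condition $(C0)$.

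The real technical work lies in controlling $F_\ell$ and $R_5$ uniformly. One expresses the derivatives of $\lambda_i$ with respect to a matrix entry using standard perturbation theory: $\partial\lambda_i = u_i^* (\partial A) u_i$, and higher derivatives bring in factors of $\frac{1}{\lambda_i-\lambda_j}$ through the resolvent expansion. Combined with the chain rule for $G$ and the hypothesis $|G^{(k)}|\leqslant N^{c_0}$, each derivative is bounded (with high probability) by a polynomial in $N$ times a product of inverse eigenvalue gaps. Here is where the \emph{main obstacle} lies: one must show that, with overwhelming probability, $|\lambda_i-\lambda_j|\geqslant N^{-1-c_1}$ for $j\neq i$ (a weak form of level repulsion), so that the derivatives are at most $N^{O(c_0+c_1)}$ and the remainder, after one more factor of $N^{-5/2}$ from the normalization $A_N=\sqrt{N}M_N$ absorbing the size of $\xi$, beats the $N^2$ sum over swaps. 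This eigenvalue gap estimate for general Wigner matrices is itself a substantial ingredient and has to be proved first using the resolvent/Stieltjes transform machinery (or borrowed from the localization theorem above). Granting it, the fifth-order remainder contributes at most $N^{-c_0}/m$ per swap and the telescoping closes.

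Finally, one must handle the small-probability event where the gap condition fails by truncating with an indicator, bounding the exceptional contribution separately using the a priori control $\esp[|\lambda_i|^4]=O(N)$ and the exponential decay in $(C0)$. Summing the $O(N^2)$ swap contributions yields the claimed $N^{-c_0}$ bound, after possibly shrinking $c_0$ to absorb the polynomial losses from the gap estimate and the derivative bounds.
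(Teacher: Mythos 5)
This statement is not proved in the paper: it is quoted verbatim as an imported result from Tao and Vu (\cite{TaVu_2011_wigner}, \cite{TaVu_2010_wigner_edge}), and the paper's only remark after the theorem is that Tao and Vu proved a more general version for a finite collection of eigenvalues, while here only a single eigenvalue is needed. There is therefore no in-paper proof against which your attempt can be compared; the theorem plays the role of an external black box that, together with the Erd\H{o}s--Yau--Yin localization theorem, drives the comparison argument in Section~\ref{comparison}.

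That said, your sketch is a faithful high-level account of the Tao--Vu replacement method: Lindeberg swapping of the $O(N^2)$ independent entries, fifth-order Taylor expansion of $t\mapsto G(\lambda_i(A^{(0,k)}+tE_{ab}))$, cancellation of the low-order terms via moment matching, and control of the remainder through resolvent/perturbation bounds on the derivatives of $\lambda_i$. Two small points of care. First, the reason diagonal entries need only match to order two is a counting argument, not a differentiation one: there are only $O(N)$ diagonal swaps, so each swap may contribute up to $O(N^{-1-\varepsilon})$, which a third-order Taylor remainder already supplies; your phrase about ``reduced number of derivative terms'' is vague on this. Second, the required gap/level-repulsion estimate (the weak statement $|\lambda_i-\lambda_j|\geqslant N^{-1-c_1}$ with overwhelming probability) cannot simply be ``borrowed from the localization theorem above'' as stated: localization controls $|\lambda_j-\gamma_j|$, which does not by itself forbid near-collisions of consecutive eigenvalues, and in Tao--Vu's paper the gap property is a separate, substantial lemma proved via a delocalization statement for eigenvectors. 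If you were to write a full proof you would need to establish (or correctly cite) that gap lemma independently.
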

Actually Tao and Vu proved this theorem in a more general form, involving a finite number of eigenvalues. In
this work, it will only be used with one given eigenvalue. See \cite{TaVu_2011_wigner} and
\cite{TaVu_2010_wigner_edge} for more
details.

It should be mentionned that Tao and Vu extended in \cite{TaVu_2011_wigner} Gustavsson's result (see equation
\eqref{gustavsson_bulk}) via this theorem. By means of a smooth bump function $G$, they compared the
probability for $\lambda_j$
to be in a given interval for a non-Gaussian matrix with almost the same probability but for a GUE matrix.
Applying this technique to $\P\big(|\lambda_j-\gamma_j| >\frac{u}{N}\big)$ in order to extend directly the
deviation inequality leads to the following in the general Wigner case: for all $\eta N \leqslant j \leqslant
(1-\eta)N$ and for all $C' \leqslant u \leqslant c'N$, \[\P\Big(|\lambda_j-\gamma_j| >\frac{u}{N}\Big)
\leqslant C\exp\Big(-\frac{u^2}{c\log N+u}\Big)+O(N^{-c_0}),\]
where $C$, $C'$, $c$ and $c'$ are positive constants depending only on $\eta$.
The bound is not exponential anymore and is
not enough to conclude towards sharp bounds on the variance or higher moments.

\subsection{Comparison with Gaussian matrices}\label{comparison}
Let $M_N$ be a Hermitian Wigner matrix and $M_N'$ be a GUE matrix such that they satisfy the hypotheses of
Theorem \ref{TV}. As the function $G:x \in \mathbb{R} \mapsto x^2$ does not satisfy
\eqref{condition_derivees}, Theorem \ref{TV} will be applied to a truncation of $G$. Theorem
\ref{thm_localization} will provide a small area around the theoretical location $\gamma_j$ where the
eigenvalue $\lambda_j$ is very likely to be in, so that the error due to the truncation will be well
controlled. Note that this procedure is valid for eigenvalues in the bulk and at the edge of the
spectrum, as well as for intermediate eigenvalues.

Let $1 \leqslant j \leqslant N$.
Set $R_N^{(j)}=(\log N)^{C\log\log
 N}N^{1/3}\min(j,N+1-j)^{-1/3}$ and $\varepsilon_N=Ce^{-(\log N)^{c\log\log N}}$. Then Theorem
\ref{thm_localization} leads to:
\begin{equation}\label{short_localization_inequality}
\P\Big(|\lambda_j-\gamma_j| \geqslant \frac{R_N^{(j)}}{N}\Big) \leqslant \varepsilon_N. 
\end{equation}
Let $\psi$ be a smooth function with support $[-2,2]$ and values in $[0,1]$ such that
$\psi(x)=\frac{1}{10}x^2$ for all $x \in [-1; 1]$. Set $G_j: x \in \mathbb{R} \mapsto
\psi\big(\frac{x-N\gamma_j}{R_N^{(j)}}\big)$. We want to apply Tao and Vu's Four Moment Theorem \ref{TV} to $G_j$. As $\psi$ is smooth and has compact support, its first five derivatives are bounded by $M>0$. Then, for all $0 \leqslant k \leqslant 5$, for all $x \in \mathbb{R}$,
\begin{equation*}
\big|G_j^{(k)}(x)\big|\leqslant \frac{M}{(R_N^{(j)})^k} \leqslant N^{c_0},
\end{equation*}
where the last inequality holds for $N$ large enough (depending only on $M$ and $c_0$). Then, the Four Moment
Theorem \ref{TV} yields: 
\begin{equation}\label{application_four_moment_thm}
\big|\esp\big[G_j(\lambda_j(A_N))\big]-\esp\big[G_j(\lambda_j(A_N'))\big]\big| \leqslant N^{-c_0}
\end{equation}
for large enough $N$. But 
\begin{align*}
\esp\big[G_j(&\lambda_j(A_N))\big] \\
& = \tfrac{1}{10}\esp\Big[\Big(\tfrac{\lambda_j(A_N)-N\gamma_j}{R_N^{(j)}}\Big)^2\mathbbm{1}_{\frac{
|\lambda_j(A_N)-N\gamma_j|}{R_N^{(j)}}\leqslant 1}\Big]
+\esp\Big[G_j(\lambda_j(A_N))\mathbbm{1}_{\frac{
|\lambda_j(A_N)-N\gamma_j|}{R_N^{(j)}}> 1}\Big]\\
 & = \tfrac{N^2}{10\big(R_N^{(j)}\big)^2}\esp\Big[(\lambda_j-\gamma_j)^2\mathbbm{1}_{|\lambda_j-\gamma_j|\leqslant \frac{R_N^{(j)}}{N}}\Big]+\esp\Big[G_j(\lambda_j(A_N))\mathbbm{1}_{\frac{
|\lambda_j(A_N)-N\gamma_j|}{R_N^{(j)}}> 1}\Big].\\
\end{align*}
On the one hand,
\begin{equation*}
 \esp\Big[G_j(\lambda_j(A_N))\mathbbm{1}_{\frac{
|\lambda_j-N\gamma_j|}{R_N^{(j)}}> 1}\Big]  \leqslant 
\P\Big(|\lambda_j(W_N)-\gamma_j|>\tfrac{R_N^{(j)}}{N}\Big) \leqslant  \varepsilon_N.
\end{equation*}
On the other hand,
\begin{equation*}
\esp\Big[(\lambda_j-\gamma_j)^2\mathbbm{1}_{
|\lambda_j-\gamma_j|\leqslant\frac{R_N^{(j)}}{N}}\Big] = \esp\big[(\lambda_j-\gamma_j)^2\big] -
\esp\Big[(\lambda_j-\gamma_j)^2\mathbbm{1}_{|\lambda_j-\gamma_j|>\frac{R_N^{(j)}}{N}}\Big].
\end{equation*}
From the Cauchy-Schwarz inequality and \eqref{bound_moment},
\begin{eqnarray*}
\esp\Big[(\lambda_j-\gamma_j)^2\mathbbm{1}_{|\lambda_j-\gamma_j|>\frac{R_N^{(j)}}{N}}\Big] & \leqslant &
\sqrt{\esp\big[(\lambda_j-\gamma_j)^4\big]\P\Big(|\lambda_j-\gamma_j|>\tfrac{R_N^{(j)}}{N}\Big)}\\
 & \leqslant &  A\sqrt{N\varepsilon_N}
\end{eqnarray*}
where $A>0$ is a numerical constant. Then
\begin{eqnarray*}
\esp\big[G_j(\lambda_j(A_N))\big] & = &
\frac{N^2}{10\big(R_N^{(j)}\big)^2}\Big(\esp\big[(\lambda_j-\gamma_j)^2\big]+O\big(N^{1/2}\varepsilon_N^{1/2}
\big)\Big)+O(\varepsilon_N)\\
 & = &
\frac{N^2}{10\big(R_N^{(j)}\big)^2}\esp\big[(\lambda_j-\gamma_j)^2\big]+O\big(N^{5/2}\varepsilon_N^{1/2}\big(R_N^{(j)}\big)^{-2}
\big)+O(\varepsilon_N).\\
\end{eqnarray*}
Repeating the same computations gives similarly
\begin{equation*}
\esp\big[G_j(\lambda_j(A_N'))\big] = 
\frac{N^2}{10\big(R_N^{(j)}\big)^2}\esp\big[(\lambda_j'-\gamma_j)^2\big]+O\big(N^{5/2}\varepsilon_N^{1/2}\big(R_N^{(j)}\big)^{-2}
\big)+O(\varepsilon_N).\\
\end{equation*}
Then \eqref{application_four_moment_thm} leads to
\[\esp\big[(\lambda_j-\gamma_j)^2\big]=\esp\big[(\lambda_j'-\gamma_j)^2\big]+O\big(N^{1/2}\varepsilon_N^{1/2}\big)+ 
O\big(N^{-2}\big(R_N^{(j)}\big)^2\varepsilon_N\big)+O\big(\big(R_N^{(j)}\big)^2N^{-c_0-2}\big). \]
As the first two error terms are smaller than the third one, the preceding equation becomes 
\begin{equation}\label{difference_second_moment}
\esp\big[(\lambda_j-\gamma_j)^2\big]=\esp\big[(\lambda_j'-\gamma_j)^2\big]+O\big(\big(R_N^{(j)}\big)^2N^{-c_0-2}\big).
\end{equation}

\subsection{Combining the results}\label{results_wigner}
We distinguish between the bulk, the edge and the intermediate cases.
Note that the constants $C(\eta)$ and $C$ depend on
the constants $B_1$ and $B_2$
in condition $(C0)$.

\subsubsection{Eigenvalues in the bulk of the spectrum}\label{bulk_wigner}
Let $0< \eta \leqslant \frac{1}{2}$ and $\eta N\leqslant j \leqslant (1-\eta)N$. From Theorem
\ref{thm_GUE_bulk},
$\esp\big[(\lambda_j'-\gamma_j)^2\big] \leqslant C(\eta)\frac{\log N}{N^2}$. Thus, from
\eqref{difference_second_moment}, it remains to show that the error term is smaller than $\frac{\log N}{N^2}$.
But
\[R_N^{(j)}=(\log N)^{C\log\log N}N^{1/3}\min(j,N+1-j)^{-1/3} \leqslant \eta^{-1/3}(\log
N)^{C\log\log N}.\]
Then $\big(R_N^{(j)}\big)^2N^{-c_0-2}=o_{\eta}\big(\frac{\log N}{N^2}\big)$. As a consequence, \[
\esp\big[(\lambda_j-\gamma_j)^2\big]=\esp\big[(\lambda_j'-\gamma_j)^2\big]+o_{\eta}\Big(\frac{\log
N}{N^2}\Big)
\]
and we get the desired result
\[
\esp\big[(\lambda_j-\gamma_j)^2\big]\leqslant C(\eta)\frac{\log N}{N^2}.
\]

\subsubsection{Eigenvalues at the edge of the spectrum}\label{edge_wigner}
From Corollary \ref{thm_GUE_edge}, $\esp\big[(\lambda_N'-\gamma_N)^2\big]=
\esp\big[(\lambda_N'-2)^2\big]\leqslant CN^{-4/3}$. By means
of \eqref{difference_second_moment}, it remains to prove that the error term is smaller than $N^{-4/3}$.
We have \[R_N^{(N)}= (\log N)^{C\log\log N}N^{1/3}.\]
Consequently
$\big(R_N^{(N)}\big)^2N^{-c_0-2}=o\big(N^{-4/3}\big)$. Then \[
\esp\big[(\lambda_N-2)^2\big]=\esp\big[(\lambda_N'-2)^2\big]+o\big(N^{-4/3}\big)
\]
and
\[
\esp\big[(\lambda_N-2)^2\big]\leqslant CN^{-4/3}.
\]
As for Gaussian matrices, the same result is available for the smallest eigenvalue $\lambda_1$.

\subsubsection{Eigenvalues between the bulk and the edge of the spectrum}\label{intermediate_Wigner}
Let $0< \eta \leqslant \frac{1}{2}$, $K > 20\sqrt{2}$ and $(1-\eta)N \leqslant j \leqslant N-K\log N$. From Theorem
\ref{thm_GUE_intermediate},
$\esp\big[(\lambda_j'-\gamma_j)^2\big] \leqslant C(\eta,K)\frac{\log (N-j)}{N^{4/3}(N-j)^{2/3}}$. Thus, from
\eqref{difference_second_moment}, it remains to show that the error term is smaller than $\frac{\log
(N-j)}{N^{4/3}(N-j)^{2/3}}$.
But
\[R_N^{(j)}=(\log N)^{C\log\log N}N^{1/3}(N+1-j)^{-1/3}.\]
Then $\big(R_N^{(j)}\big)^2N^{-c_0-2}=o\big(\frac{\log (N-j)}{N^{4/3}(N-j)^{2/3}}\big)$. As a
consequence, \[
\esp\big[(\lambda_j-\gamma_j)^2\big]=\esp\big[(\lambda_j'-\gamma_j)^2\big]+o\Big(\frac{\log
(N-j)}{N^{4/3}(N-j)^{2/3}}\Big)
\]
and we get the desired result
\[
\esp\big[(\lambda_j-\gamma_j)^2\big]\leqslant C(\eta,K)\frac{\log
(N-j)}{N^{4/3}(N-j)^{2/3}}.\]
A similar result holds for the left-side of the spectrum.

\section{Real matrices}\label{real}
The goal of this section is to prove Theorems \ref{thm_bulk}, \ref{thm_edge} and \ref{thm_intermediate} for
real Wigner matrices. Tao and Vu's Four Moment Theorem (Theorem \ref{TV}) as well as Erd\"os, Yau and Yin's
Localization Theorem (Theorem \ref{thm_localization}) still hold for real Wigner matrices. Section
\ref{Wigner} is therefore valid for real matrices. The point is then to establish the results in the GOE case.

As announced in Section \ref{edge_GUE}, the variance of eigenvalues at the edge of the spectrum is known to be
bounded by $N^{-4/3}$ for GOE matrices (see \cite{LeRi_2010_deviations}). The conclusion for the smallest
and largest eigenvalues is then established for large families of real symmetric Wigner matrices.
\[\Var(\lambda_N) \leqslant \frac{\tilde{C}}{N^{4/3}} \quad \textrm{and} \quad \Var(\lambda_1)\leqslant
\frac{\tilde{C}}{N^{4/3}}.\]

For eigenvalues in the bulk of the spectrum, O'Rourke proved in \cite{Or_2010_fluctuations} a Central Limit
Theorem which is very similar to the one established by Gustavsson in \cite{Gu_2005_fluctuations}. In
particular, the normalisation is still of the order of $\big(\frac{\log N}{N^2}\big)^{1/2}$ and differs from
the
complex
case only by a constant. It is therefore natural to expect the same bound on the variance for GOE matrices.
The situation is completely similar for intermediate eigenvalues.
But GOE matrices do not have the same determinantal properties as GUE matrices, and it is therefore not clear
that a deviation inequality (similar to \eqref{inequality_counting_function}) holds for the eigenvalue
counting function. However, as explained by O'Rourke in \cite{Or_2010_fluctuations}, GOE and GUE matrices are
linked by interlacing formulas established by Forrester and Rains (see \cite{FoRa_2001_relationships}). These
formulas lead to the following relation between the eigenvalue counting functions in the complex and in the
real cases: for all $t \in
\mathbb{R}$,

\begin{equation}\label{entrelacement}
\mathcal{N}_{t}(W_N^{\mathbb{C}})=\frac{1}{2}\big(\mathcal{N}_{t}(W_N^{\mathbb{R}})+\mathcal{N}_{t}(\tilde{W}_N^{\mathbb{R}}
)\big)+\zeta_N(t) ,
\end{equation}
where
$W_N^{\mathbb{C}}=\frac{1}{\sqrt{N}}M_N^{\mathbb{C}}$ and $M_N^{\mathbb{C}}$ is from the GUE, $W_N^{\mathbb{R}}=\frac{1}{\sqrt{N}}M_N^{\mathbb{R}}$,
$\tilde{W}_N^{\mathbb{R}}=\frac{1}{\sqrt{N}}\tilde{M}_N^{\mathbb{R}}$ and $M_N^{\mathbb{R}}$ and
$\tilde{M}_N^{\mathbb{R}}$ are independent matrices from the GOE and $\zeta_N(t)$ takes values in $\left\{-1,-\frac{1}{2},0,\frac{1}{2},1\right\}$.
See \cite{Or_2010_fluctuations} for more details.

The aim is now to establish a deviation inequality for the eigenvalue counting function similar to \eqref{inequality_counting_function}. From \eqref{inequality_counting_function}, we know that for all $u \geqslant 0$, 
\begin{equation*}
 \P\big(|\mathcal{N}_t(W_N^{\mathbb{C}})-N\rho_t|\geqslant u+C_1\big)\leqslant 2\exp\Big(-\frac{u^2}{2\sigma_t^2+u}\Big).
\end{equation*}
Set $C_1'=C_1+1$ and let $u \geqslant 0$. We can then write
\begin{align*}
\P\big(\mathcal{N}_t(W_N^{\mathbb{R}})- N & \rho_t \geqslant u+C_1'\big)^2 \\
& =  \P\Big(
\mathcal{N}_t(W_N^{\mathbb{R}})-N\rho_t \geqslant u+C_1', \ \mathcal{N}_t(\tilde{W}_N^{\mathbb{R}})-N\rho_t \geqslant u+C_1'\Big)\\
 & \leqslant  \P\Big(\tfrac{1}{2}\big(\mathcal{N}_t(W_N^{\mathbb{R}})+\mathcal{N}_t(\tilde{W}_N^{\mathbb{R}})\big)-N\rho_t \geqslant u+C_1'\Big)\\
 & \leqslant  \P\big(\mathcal{N}_t(W_N^{\mathbb{C}})-N\rho_t \geqslant u+C_1'-1\big)\\
 & \leqslant  2\exp\Big(-\frac{u^2}{2\sigma_t^2+u}\Big).
\end{align*}
Repeating the computations for $\P\big(\mathcal{N}_t(W_N^{\mathbb{R}})-N\rho_t \leqslant -u-C_1'\big)$ and
combining with the preceding yield
\begin{equation}\label{real_inequality_counting_function}
 \P\big(|\mathcal{N}_t(W_N^{\mathbb{R}})-N\rho_t| \geqslant u+C_1'\big) \leqslant 2\sqrt{2}\exp\Big(-\frac{u^2}{4\sigma_t^2+2u}\Big).
\end{equation}
Note that $\sigma_t^2$ is still the variance of $\mathcal{N}_t(W_N^{\mathbb{C}})$ in the preceding formula.

What remains then to be proved is very similar to the complex case. From
\eqref{real_inequality_counting_function} and Gustavsson's bounds on the variance $\sigma_t^2$ (see
\eqref{sup_variance} for the bulk case and \eqref{sup_variance_intermediate} for the intermediate
case), deviation inequalities for individual eigenvalues can be deduced, as was done to prove
Propositions~\ref{prop_eigenvalue_deviation_bulk} and \ref{prop_eigenvalue_deviation_intermediate}.
It is
then straightforward to derive the announced bounds on the variances for GOE matrices.
The argument developed in Section \ref{Wigner} in order to extend the GUE results to large families of
Hermitian Wigner matrices can be reproduced to reach the desired bounds on the variances of eigenvalues in
the bulk and between the bulk and the edge of the spectrum for families of real Wigner matrices.
Then there exists a constant $C(\eta)>0$ such that for all $\eta N \leqslant
j \leqslant (1-\eta)N$,
\[\Var(\lambda_j) \leqslant \esp\big[(\lambda_j-\gamma_j)^2\big] \leqslant C(\eta)\frac{\log N}{N^2},\]
and there exists a constant $C(\eta,K)>0$ such that for all $(1-\eta)N \leqslant j \leqslant N-K\log N$ (and
similarly for the left-side of the spectrum),
\[\Var(\lambda_j) \leqslant \esp\big[(\lambda_j-\gamma_j)^2\big] \leqslant C(\eta,K)\frac{\log
(N-j)}{N^{4/3}(N-j)^{2/3}}.\]

\section{A corollary on the $2$-Wasserstein distance}\label{wasserstein}

The bounds on the variances, more exactly on $\esp[(\lambda_j-\gamma_j)^2]$,
developed in the preceding sections lead to a bound on the rate of
convergence of the empirical spectral measure $L_N$ towards the semicircle law $\rho_{sc}$ in terms of
$2$-Wasserstein distance. Recall that $W_2(L_N,\rho_{sc})$ is a random variable defined by
\[W_2(L_N,\rho_{sc})
    =\inf \bigg (\int_{\mathbb{R}^2}|x-y|^2\,d\pi(x,y)\bigg)^{1/2},\]
where the infimum is taken over all probability measures $\pi$ on $\mathbb{R}^2$ with respective
marginals $L_N$ and $\rho_{sc}$. To achieve the expected bound, we rely
on another expression of $W_2$ in terms of distribution functions, namely
\begin{equation}\label{w2_fct_repartition}
 W_2^2(L_N,\rho_{sc}) = \int_0^1\big(F_N^{-1}(x)-G^{-1}(x)\big)^2\,dx,
\end{equation}
where $F_N^{-1}$ (respectively $G^{-1}$) is the generalized inverse of the distribution function $F_N$
(respectively $G$) of $L_N$ (respectively $\rho_{sc}$) (see for example \cite{Vi_2003_book}). 
On the basis of this representation, the following statement may be derived.

\begin{prop}\label{prop_w2}
There exists a universal constant $C>0$ such that for all $N \geqslant 1$,
\begin{equation}\label{borne_ps_w2}
 W_2^2(L_N,\rho_{sc}) \leqslant \frac{2}{N}\sum_{j=1}^N
(\lambda_j-\gamma_j)^2+\frac{C}{N^2}.
\end{equation}
\end{prop}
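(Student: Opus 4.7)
The plan is to start from the quantile representation (\ref{w2_fct_repartition}) and exploit the fact that $L_N$ is discrete. Its distribution function $F_N$ is a step function, so the generalized inverse satisfies $F_N^{-1}(x)=\lambda_j$ for $x\in\bigl(\tfrac{j-1}{N},\tfrac{j}{N}\bigr]$. Splitting the integral accordingly gives
\[
W_2^2(L_N,\rho_{sc}) \,=\, \sum_{j=1}^N \int_{(j-1)/N}^{j/N} \bigl(\lambda_j-G^{-1}(x)\bigr)^2\,dx.
\]
On each subinterval I would insert $\pm \gamma_j$ and apply $(a+b)^2\leqslant 2a^2+2b^2$. The $\lambda_j-\gamma_j$ part, once integrated, produces exactly $\tfrac{2}{N}\sum_j(\lambda_j-\gamma_j)^2$, which is the first term on the right of \eqref{borne_ps_w2}.

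The remaining task is a purely deterministic estimate on the semicircle quantiles: show that
\[
R_N \,=\, 2\sum_{j=1}^N \int_{(j-1)/N}^{j/N}\bigl(G^{-1}(x)-\gamma_j\bigr)^2\,dx \,\leqslant\, \frac{C}{N^2}.
\]
Since $G^{-1}$ is nondecreasing with $G^{-1}(j/N)=\gamma_j$, the integrand on $\bigl(\tfrac{j-1}{N},\tfrac{j}{N}\bigr]$ is bounded by $(\gamma_j-\gamma_{j-1})^2$ (with the convention $\gamma_0=-2$), so it suffices to control $\tfrac{2}{N}\sum_{j=1}^N(\gamma_j-\gamma_{j-1})^2$.

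I would split this sum between the bulk and the edges. For indices $j$ with $\gamma_j$ at positive distance from $\pm 2$, the density $\rho_{sc}$ is bounded below and the defining relation $\int_{\gamma_{j-1}}^{\gamma_j}\rho_{sc}(x)\,dx=\tfrac{1}{N}$ gives $\gamma_j-\gamma_{j-1}=O(1/N)$, so these terms contribute at most $O(1/N^2)$ in total. Near the right edge (the left edge being symmetric), I would use the bounds \eqref{encadrement_gamma_j}, namely $2-\gamma_j\asymp \bigl(\tfrac{N-j}{N}\bigr)^{2/3}$, together with $\rho_{sc}(x)\asymp \sqrt{2-x}$ near $2$, to deduce $\gamma_j-\gamma_{j-1}=O\bigl(N^{-2/3}(N-j+1)^{-1/3}\bigr)$.

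Summing the edge contributions then yields
\[
\frac{2}{N}\sum_{j\geqslant N/2}(\gamma_j-\gamma_{j-1})^2 \,\leqslant\, \frac{C}{N^{7/3}}\sum_{k=1}^{N}\frac{1}{k^{2/3}} + \frac{C'}{N^{7/3}} \,=\, O\!\left(\frac{1}{N^2}\right),
\]
since $\sum_{k=1}^N k^{-2/3}=O(N^{1/3})$, which is exactly the right balance. The main obstacle is precisely this edge contribution, where the quantile spacings blow up: one needs the sharp cube-root behavior of $2-\gamma_j$ and must verify that the divergent-looking sum of the squared spacings is exactly compensated by the $N^{-7/3}$ prefactor to give $O(1/N^2)$. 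Everything else is an elementary rearrangement of the quantile representation.
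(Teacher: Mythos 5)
Your proposal is correct and follows essentially the same route as the paper: the quantile representation \eqref{w2_fct_repartition}, the interval-by-interval splitting with $\pm\gamma_j$ and $(a+b)^2\leqslant 2a^2+2b^2$, the bound $|G^{-1}(x)-\gamma_j|\leqslant\gamma_j-\gamma_{j-1}$ by monotonicity, and the spacing estimate $\gamma_j-\gamma_{j-1}=O\bigl(N^{-2/3}\min(j,N+1-j)^{-1/3}\bigr)$ from \eqref{encadrement_gamma_j} and the square-root decay of the semicircle density, summed to give $O(1/N^2)$. No gaps; this matches the paper's argument.
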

\begin{proof} From \eqref{w2_fct_repartition},
\[W_2^2(L_N,\rho_{sc}) = \int_0^1\big(F_N^{-1}(x)-G^{-1}(x)\big)^2\,dx.\] Then,
\begin{align*}
 W_2^2(L_N,\rho_{sc}) & = \sum_{j=1}^N\int_{\frac{j-1}{N}}^{\frac{j}{N}}\big(\lambda_j-G^{-1}(x)\big)^2\,dx\\
 & \leqslant \frac{2}{N}\sum_{j=1}^N(\lambda_j-\gamma_j)^2 +
2\sum_{j=1}^N\int_{\frac{j-1}{N}}^{\frac{j}{N}}\big(\gamma_j-G^{-1}(x)\big)^2\,dx.
\end{align*}
But $\gamma_j=G^{-1}\big(\frac{j}{N}\big)$ and $G^{-1}$ is non-decreasing. Therefore, $\big|\gamma_j-G^{-1}(x)\big| \leqslant
\gamma_{j}-\gamma_{j-1}$ for all $x \in \big[\frac{j-1}{N},\frac{j}{N}\big]$. Consequently,
\begin{equation}\label{inegalite_w2_intermediaire}
W_2^2(L_N,\rho_{sc}) \leqslant \frac{2}{N}\sum_{j=1}^N(\lambda_j-\gamma_j)^2 +
\frac{2}{N}\sum_{j=1}^N(\gamma_j-\gamma_{j-1})^2.
\end{equation}
But if $j-1 \geqslant \frac{N}{2}$ (and therefore $\gamma_{j-1} \geqslant 0$),
\begin{align*}
\frac{1}{N} & = \int_{\gamma_{j-1}}^{\gamma_j}\frac{1}{2\pi}\sqrt{4-x^2}\,dx\\
  & \geqslant \frac{1}{\sqrt{2}\pi}\int_{\gamma_{j-1}}^{\gamma_j}\sqrt{2-x}\,dx\\
 & \geqslant
\frac{\sqrt{2}}{3\pi}(2-\gamma_{j-1})^{3/2}\bigg(1-\Big(1-\frac{\gamma_j-\gamma_{j-1}}{2-\gamma_{j-1}}\Big)^{
3/2 }\bigg)\\
  & \geqslant
 \frac{\sqrt{2}}{3\pi}(2-\gamma_{j-1})^{1/2}(\gamma_j-\gamma_{j-1})\\
 & \geqslant
\frac{\sqrt{2}}{3\pi}\Big(\frac{3\pi}{2}\frac{N-j+1}{N}\Big)^{1/3}(\gamma_j-\gamma_{j-1}),
\end{align*}
from \eqref{encadrement_gamma_j}. Then
\[\gamma_j-\gamma_{j-1} \leqslant \frac{(3\pi)^{2/3}2^{-1/6}}{N^{2/3}(N-j+1)^{2/3}}.\]
It may be shown that a similar bound holds if $j-1 \leqslant \frac{N}{2}$. As a summary, there exists a
universal constant $c>0$ such that, for all $j \geqslant
2$,
\begin{equation}
\gamma_j-\gamma_{j-1} \leqslant \frac{c}{N^{2/3}\min(j,N+1-j)^{1/3}}.
\end{equation}
This yields
\begin{equation*}
 \sum_{j=1}^N(\gamma_j-\gamma_{j-1})^2  \leqslant 
\frac{c^2}{N^{4/3}}\sum_{j=1}^N\frac{1}{\min(j,N+1-j)^{2/3}}\leqslant \frac{C}{N}, 
\end{equation*}
where $C>0$ is a universal constant.
Then \eqref{inegalite_w2_intermediaire} becomes
\[W_2^2(L_N,\rho_{sc}) \leqslant \frac{2}{N}\sum_{j=1}^N
(\lambda_j-\gamma_j)^2+\frac{C}{N^2},\]
where $C>0$ is a universal constant, which is the claim.
\end{proof}

\begin{proof}[Proof of Corollary \ref{cor_wasserstein_2}]
Let $N \geqslant 2$. Due to Proposition \ref{prop_w2},
\[\esp\big[W_2^2(L_N,\rho_{sc})\big] \leqslant \frac{2}{N}\sum_{j=1}^N
\esp\big[(\lambda_j-\gamma_j)^2\big]+\frac{C}{N^2}.\]
We then make use of the bounds on $\esp\big[(\lambda_j-\gamma_j)^2\big]$
produced in the previous sections.
Set $\eta \in (0,\frac{1}{2}]$ and $K>20\sqrt{2}$ so that $K\log N \leqslant \eta N$.
We first decompose
\begin{align*}
\sum_{j=1}^N \esp\big[(\lambda_j-\gamma_j)^2\big] & = \sum_{j=1}^{K\log
N}\esp\big[(\lambda_j-\gamma_j)^2\big] + \sum_{j=K\log N+1}^{\eta
N}\esp\big[(\lambda_j-\gamma_j)^2\big]\\
 & + \sum_{j=\eta N+1}^{(1-\eta)N-1}\esp\big[(\lambda_j-\gamma_j)^2\big] + \sum_{j=(1-\eta)N}^{N-K\log
N-1}\esp\big[(\lambda_j-\gamma_j)^2\big]\\
 & +\sum_{j=N-K\log N}^{N}\esp\big[(\lambda_j-\gamma_j)^2\big]\\
 & = \Sigma_1 + \Sigma_2 + \Sigma_3 + \Sigma_4 + \Sigma_5.
\end{align*}
The sum $\Sigma_3$ will be bounded using the bulk case (Theorem \ref{thm_bulk}), while Theorem \ref{thm_edge} will be
used to handle $\Sigma_2$ and $\Sigma_4$. 
A crude version of Theorem \ref{thm_localization} will be enough to
bound $\Sigma_1$ and $\Sigma_5$. To start with thus, from Theorem \ref{thm_bulk},
\[\Sigma_3 \leqslant \sum_{j=\eta N+1}^{(1-\eta)N-1}C(\eta)\frac{\log N}{N^2} \leqslant C(\eta)\frac{\log
N}{N}.\]
Secondly, from Theorem \ref{thm_intermediate},
\begin{equation*}
\Sigma_2+\Sigma_4  \ \leqslant \ \frac{C(\eta,K)}{N^{4/3}}\sum_{j=K\log N+1}^{\eta
N}\frac{\log j}{j^{2/3}} \ \leqslant \ C(\eta,K)\frac{\log N}{N}.
\end{equation*}
Next $\Sigma_1$ and $\Sigma_5$ have only $K\log N$ terms. If each term is bounded by $\frac{C}{N}$ where
$C$ is a positive universal constant, we get that $\Sigma_1+\Sigma_5 \leqslant \frac{2KC\log N}{N}$,
which is enough to prove the desired result on $\sum_{j=1}^N\esp\big[(\lambda_j-\gamma_j)^2\big]$. For $N$
large
enough depending only on constant $C$ in Theorem \ref{thm_localization}, $\frac{1}{\sqrt{N}} \geqslant
\frac{(\log N)^{C\log\log N}}{N^{2/3}\min(j,N+1-j)^{1/3}}$ and Theorem \ref{thm_localization} yields
\[\P\Big(|\lambda_j-\gamma_j| \geqslant \frac{1}{\sqrt{N}}\Big) \leqslant
Ce^{-(\log N)^{c\log \log N}}.\] Then, by the Cauchy-Schwarz inequality,
\begin{align*}
 \esp\big[(\lambda_j-\gamma_j)^2\big] & \leqslant
\esp\Big[(\lambda_j-\gamma_j)^2\mathbbm{1}_{|\lambda_j-\gamma_j|\leqslant \frac{1}{\sqrt{N}}}\Big]+
\esp\Big[(\lambda_j-\gamma_j)^2\mathbbm{1}_{|\lambda_j-\gamma_j|> \frac{1}{\sqrt{N}}}\Big]\\
 & \leqslant
\frac{1}{N}+\sqrt{\esp\big[|\lambda_j-\gamma_j|^4\big]}\sqrt{\P\Big(|\lambda_j-\gamma_j| > \frac{1}{\sqrt{N}}\Big)}\\
& \leqslant
\frac{1}{N}+\sqrt{3}C N^{1/2}e^{-(\log N)^{c\log \log N}}.
\end{align*}
As $\sqrt{3}C N^{1/2}e^{-(\log N)^{c\log \log N}}=o(\frac{1}{N})$, there exists a constant $C>0$ such that
$\esp\big[(\lambda_j-\gamma_j)^2\big] \leqslant \frac{C}{N}$.
Then
\[\Sigma_1+\Sigma_5 \leqslant 2KC\frac{\log N}{N}.\]
As a consequence,
\[\sum_{j=1}^N \esp\big[(\lambda_j-\gamma_j)^2\big] \leqslant C\frac{\log N}{N}.\]
Therefore
\[\esp\big[W_2(L_N,\rho_{sc})^2\big] \leqslant C\frac{\log N}{N^2},\]
where $C>0$ is a universal constant, which is the claim. The corollary is thus established.
\end{proof}

The preceding Corollary \ref{cor_wasserstein_2} implies that
\[\esp\big[W_1(L_N,\rho_{sc})\big] \leqslant C\frac{\sqrt{\log N}}{N}.\]
This is an improvement of Meckes and Meckes' rate of convergence obtained in \cite{MeMe_2011_convergence_spectral_measure}. Note however that the distance
studied in \cite{MeMe_2011_convergence_spectral_measure}
is the expected $1$-Wasserstein distance between $L_N$ and its mean instead of $\rho_{sc}$.
The rate of convergence in $1$-Wasserstein distance can be  furthermore
compared to the rate of convergence in Kolmogorov distance. Indeed, if $\mu$ and $\nu$ are two probability measures on $\mathbb{R}$ such that $\nu$ has a bounded density function with respect to Lebesgue measure, $d_K(\mu,\nu) \leqslant c\sqrt{W_1(\mu,\nu)}$, where $c>0$ depends on the bound on the density function. This is due to the fact that
\[ W_1(\mu,\nu) = \sup_{f  1\textrm{-Lipschitz}}\bigg(\int_{\mathbb{R}} fd\mu-\int_{\mathbb{R}} fd\nu\bigg) \]
(see for example \cite{Vi_2003_book}) and $d_K(\mu,\nu)=\sup_{x \in \mathbb{R}} \big|\mu\big((-\infty,x]\big)-\nu\big((-\infty,x]\big)\big|$. Approximating $\mathbbm{1}_{(-\infty,x]}$ from above and from below by $\frac{1}{\varepsilon}$-Lipschitz functions and optimizing on $\varepsilon$ gives the result. Therefore,
the preceding implies that
\[\esp\big[d_K(L_N,\rho_{sc})\big] \leqslant c\frac{(\log N)^{1/4}}{N^{1/2}} \]
which is however far from Götze and Tikhomirov's recent bound \cite{GoTi_2011_convergence}
\[\esp\big[d_K(L_N,\rho_{sc})\big] \leqslant \frac{(\log N)^c}{N}. \]

\section{Eigenvalue variance bounds for covariance matrices}\label{covariance}
This section provides the analogous non-asymptotic bounds on the variance of eigenvalues for covariance
matrices. The proofs will be detailed in another redaction. Therefore, this section contains only the
background and the results. 

Random covariance matrices are defined by the following. Let $X$ be a $m\times n$ (real or complex) matrix,
with $m
\geqslant n$, such that its entries are independent, centered and have variance $1$. Then
$S_{m,n}=\frac{1}{n}X^*X$ is a random covariance matrix. An important example is the case when the entries of
$X$ are
Gaussian. Then $S_{m,n}$ belongs to the so-called Laguerre Unitary Ensemble (LUE) if the entries of $X$ are
complex and to the Laguerre Orthogonal Ensemble (LOE) if they are real. $S_{m,n}$ is Hermitian (or real
symmetric) and therefore has $n$ real eigenvalues. As $m \geqslant n$, none of these eigenvalues is trivial.
Furthermore, these eigenvalues are nonnegative and will be denoted by $0 \leqslant \lambda_1 \leqslant \dots
\leqslant \lambda_n$.

Similarly to Wigner's Theorem, the classical Marchenko-Pastur theorem states that, if $\frac{m}{n} \to \rho
\geqslant 1$ when $n$ goes to infinity, the empirical spectral measure
$L_{m,n}=\frac{1}{n}\sum_{j=1}^n\delta_{\lambda_j}$ converges almost surely to a deterministic measure,
called the Marchenko-Pastur distribution of parameter $\rho$. This measure is compactly supported and is
absolutely continuous with respect to Lebesgue measure, with density
\[ d\mu_{MP(\rho)}(x)=\frac{1}{2\pi
x}\sqrt{\big(b_{\rho}-x\big)\big(x-a_{\rho}\big)}\mathbbm{1}_{[a_{\rho},b_{\rho}]}(x)dx, \]
where $a_{\rho}=(1-\sqrt{\rho})^2$ and $b_{\rho}=(1+\sqrt{\rho})^2$ (see for example \cite{BaSi_2010_book}). Two different behaviors arise according to the value of $\rho$. If $\rho>1$, $a_{\rho}>0$ is a soft edge, which means that an eigenvalue $\lambda_j$ can be larger or smaller than $a_{\rho}$. On the contrary, if $\rho=1$, $a_{\rho}=0$ is called a hard edge: no eigenvalue can be less than $a_{\rho}$. Furthermore, the Marchenko-Pastur density function explodes at $0$. It is the case in particular when $m=n$.
We will denote by $\mu_{m,n}$ the approximate Marchenko-Pastur distribution whose density is defined by
\[\mu_{m,n}(x) =
\frac{1}{2\pi x}\sqrt{(x-a_{m,n})(b_{m,n}-x)}\mathbbm{1}_{[a_{m,n};\hspace{0.5mm} b_{m,n}]}(x), \]
with $a_{m,n}=(1-\sqrt{\frac{m}{n}})^2$ and $b_{m,n}=(1+\sqrt{\frac{m}{n}})^2$. 

The asymptotic behaviors of individual eigenvalues for LUE matrices have been known for some time and extended
to more general covariance matrices in the last decade. For an eigenvalue in the bulk of the spectrum, i.e.
$\lambda_j$ such that $\eta n \leqslant j \leqslant (1-\eta)n$, a Central Limit Theorem was proved by Su (see
\cite{Su_2006_fluctuations}) and Tao-Vu (see \cite{TaVu_2012_covariance}). From this theorem, the variance of
such eigenvalues is guessed to be of the order of $\frac{\log n}{n^2}$. For right-side intermediate
eigenvalues, i.e. $\lambda_j$ such that $\frac{j}{n} \to 1$ and $n-j \to \infty$, Su, and later Tao-Vu and
Wang, proved a CLT (see \cite{Su_2006_fluctuations}, \cite{TaVu_2012_covariance} and
\cite{Wa_2011_covariance_edge}). The variance appears to be of the order of $\frac{\log
(n-j)}{n^{4/3}(n-j)^{2/3}}$. Similar results probably hold for the left-side of the spectrum, when $\rho>1$.
Finally, for the smallest and the largest eigenvalues $\lambda_1$ and $\lambda_n$, CLTs were proved for
Gaussian matrices by Borodin-Forrester (see \cite{BoFo_2003_hard_soft_edge_transition}). Several authors
extended these results to more general covariance matrices. The latest results are due to Tao-Vu and Wang (see
\cite{TaVu_2012_covariance} and \cite{Wa_2011_covariance_edge}). Their variances are then guessed to be of the
order of $n^{-4/3}$. It should be mentionned that the result for the smallest eigenvalue only holds when
$\rho>1$. When $\rho=1$, in the case of a squared matrix, Edelman proved a CLT for the smallest eigenvalue
$\lambda_1$, extended by Tao and Vu in \cite{TaVu_2010_smallest_singular_value}, from which the variance is
guessed to be of the order of $n^{-4}$.

The following statement summarizes a number of quantitative bounds on the eigenvalues of covariance matrices
which are proved by methods similar to the ones developed in the preceding sections. For simplicity, we
basically assume that $\rho>1$. More precisely, we assume that $1<A_1 \leqslant \frac{m}{n} \leqslant A_2$
(where $A_1$ and $A_2$ are fixed constants) and that $S_{m,n}$ is a covariance matrix whose entries have an
exponential decay (condition $(C0)$) and have
the same first four moments as those of a LUE matrix.
\begin{thm}
\begin{enumerate}
 \item \textbf{In the bulk of the spectrum.}\\
 Let $\eta \in (0,\frac{1}{2}]$. There exist a constant $C>0$ (depending on $\eta$, $A_1$ and $A_2$) such that for all covariance matrix $S_{m,n}$, for all $\eta n\leqslant j \leqslant (1-\eta)n$,
 \[ \Var(\lambda_j) \leqslant C\frac{\log n}{n^2}.\]
 \item \textbf{Between the bulk and the edge of the spectrum.}\\
 There exists a constant $\kappa>0$ (depending on $A_1$ and $A_2$) such that the following holds. For all $K>\kappa$, for all $\eta \in (0,\frac{1}{2}]$, there exists a constant $C>0$~(depending on $K$, $\eta$, $A_1$ and $A_2$) such that for all covariance matrix $S_{m,n}$, for all $(1-\eta)n \leqslant j \leqslant n-K\log n$,
 \[ \Var(\lambda_j) \leqslant C\frac{\log (n-j)}{n^{4/3}(n-j)^{2/3}}.\]
 \item \textbf{At the edge of the spectrum.}\\
 There exists a constant $C>0$ (depending on $A_1$ and $A_2$) such that, for all covariance matrix $S_{m,n}$,
 \[ \Var(\lambda_n) \leqslant Cn^{-4/3}.\]
\end{enumerate}

\end{thm}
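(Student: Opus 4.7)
The plan is to transpose the strategy developed in Sections~\ref{GUE} and \ref{Wigner} to the covariance setting, using the Marchenko--Pastur law $\mu_{m,n}$ in place of the semicircle law $\rho_{sc}$ and the LUE in place of the GUE. The assumption $1 < A_1 \leqslant m/n \leqslant A_2$ guarantees that both edges $a_{m,n}$ and $b_{m,n}$ are soft and bounded away from $0$, so the Marchenko--Pastur density behaves essentially like the semicircle density (square-root vanishing at the endpoints, uniformly in $m,n$), and the analysis of the theoretical locations $\gamma_j^{(m,n)}$, defined by $\int_{a_{m,n}}^{\gamma_j^{(m,n)}}\mu_{m,n}(x)\,dx = j/n$, reduces to estimates of exactly the same form as \eqref{encadrement_gamma_j}.

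The first step is to handle the LUE. The determinantal structure of LUE ensures that the eigenvalue counting function $\mathcal{N}_t = \#\{j: \lambda_j \leqslant t\}$ is again distributed as a sum of independent Bernoulli random variables, so Bernstein's inequality yields a deviation bound analogous to \eqref{inequality_counting_function}, and one can import the variance estimates $\sigma_t^2 \leqslant c_\delta \log n$ in the bulk (and $\sigma_{t_n}^2 \leqslant c_{\tilde\eta,\tilde K}\log(n(b_{m,n}-t_n)^{3/2})$ near the upper edge) from the CLTs of Su and Tao--Vu. Combining these with the square-root behavior of $\mu_{m,n}$ on the right of $\gamma_j^{(m,n)}$ reproduces verbatim the proofs of Propositions~\ref{prop_eigenvalue_deviation_bulk} and \ref{prop_eigenvalue_deviation_intermediate}, and a crude moment bound $\esp[\lambda_j^4] \leqslant \esp[\Tr(S_{m,n}^4)] = O(n)$ (easy to compute for Gaussian matrices because moments of $\Tr(S_{m,n}^p)$ are polynomial in $m/n$) plays the role of \eqref{bound_moment} in the integration-by-tails argument that gives the $L^2$ bound. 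The edge bound for LUE is already established in the literature via Ledoux--Rider type deviation inequalities for $\beta$-ensembles applied to the Laguerre weight.

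The second step is to transfer these GUE-analogue bounds to arbitrary covariance matrices satisfying $(C0)$ and matching LUE moments to order four. This uses the covariance analogues of the Localization Theorem of Erd\H{o}s--Yau--Yin and the Four Moment Theorem of Tao--Vu (both available for sample covariance matrices; see \cite{TaVu_2012_covariance}). The truncation-by-smooth-bump-function argument of Section~\ref{comparison} then goes through unchanged: one applies the Four Moment Theorem to $G_j(x)=\psi((x-n\gamma_j^{(m,n)})/R_n^{(j)})$, controls the tail using the Localization estimate together with the fourth-moment bound on $\lambda_j$, and concludes that
\[ \esp\big[(\lambda_j-\gamma_j^{(m,n)})^2\big] = \esp\big[(\lambda_j'-\gamma_j^{(m,n)})^2\big] + O\big((R_n^{(j)})^2 n^{-c_0-2}\big), \]
where $\lambda_j'$ is the corresponding LUE eigenvalue. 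The error term is negligible against each of the three target bounds $\log n/n^2$, $\log(n-j)/n^{4/3}(n-j)^{2/3}$, and $n^{-4/3}$, exactly as in Section~\ref{results_wigner}.

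The main obstacle is purely bookkeeping: one has to verify that the Marchenko--Pastur density really does satisfy, uniformly in $m/n \in [A_1,A_2]$, the two-sided square-root estimates near $b_{m,n}$ (and, for the left intermediate case, near $a_{m,n}$) that were used implicitly via \eqref{encadrement_gamma_j}. The hypothesis $A_1 > 1$ keeps $a_{m,n}$ bounded away from $0$ and away from $b_{m,n}$, so the factor $1/(2\pi x)$ in the Marchenko--Pastur density is comparable to a constant on the relevant range, and these estimates hold with constants depending only on $A_1$ and $A_2$. Once this is recorded, the proof follows the same template as for Wigner matrices and no new conceptual difficulty arises; the edge case in particular requires no new work beyond quoting the LUE edge variance bound and combining it with the comparison inequality above.
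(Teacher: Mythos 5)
Your proposal follows exactly the strategy the paper sketches for this theorem: first establish the LUE case via the determinantal structure of the eigenvalue process (Bernstein deviation bound for the counting function, Su/Tao--Vu variance estimates, square-root behavior of the Marchenko--Pastur density near the soft edges, crude trace moment bound, integration by tails), take the edge case from Ledoux--Rider, then transfer to general covariance matrices satisfying $(C0)$ and matching four moments by combining a localization theorem with the Four Moment Theorem, precisely as in Sections~\ref{comparison} and \ref{results_wigner}. The only small slip is attributional: the localization result for covariance matrices is due to Pillai and Yin \cite{PiYi_2011_covariance}, not to \cite{TaVu_2012_covariance}, which supplies the Four Moment Theorem (together with Wang \cite{Wa_2011_covariance_edge}).
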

As for Wigner matrices, the first two results of this theorem are first proved in the Gaussian case, using the
fact that the eigenvalues of a LUE matrix form a determinantal process. It is then possible to derive a
deviation inequality for the eigenvalue counting function and then for individual eigenvalues. Integrating
leads to the results for LUE matrices. The result for the largest eigenvalue in the Gaussian case is already
known, see \cite{LeRi_2010_deviations}. These bounds are then extended to non-Gaussian matrices, relying on
three recent papers. First, Pillai and Yin proved in \cite{PiYi_2011_covariance} localization properties for
individual eigenvalues of covariance matrices, very similar to the localization properties for Wigner matrices
established by Erdös, Yau and Yin in \cite{ErYaYi_2010_rigidity}. Secondly, Tao and Vu (see
\cite{TaVu_2012_covariance}) and later Wang (see \cite{Wa_2011_covariance_edge}) proved a Four Moment Theorem
for these matrices. Combining these theorems as for Wigner matrices yield the theorem.

\subsection*{Acknowledgements}
I would like to thank my advisor, Michel Ledoux, for bringing up this problem to me, for the several
discussions we had about this work, and Emmanuel Boissard for very useful
conversations about Wasserstein distances.

\noindent Sandrine Dallaporta\\
Institut de Math\'ematiques de Toulouse, UMR $5219$ du CNRS\\
Universit\'e de Toulouse, F-$31062$, Toulouse, France\\
sandrine.dallaporta@math.univ-toulouse.fr

\end{document}